\newcommand{\subdiff}{\partial}
\newcommand{\defeq}{:=}
\DeclareMathOperator*{\conv}{conv}
\newcommand{\norm}[1]{\|#1\|}
\DeclareMathOperator{\TV}{TV}
\DeclareMathOperator{\TGV}{TGV}
\DeclareMathOperator{\PSNR}{PSNR}
\DeclareMathOperator{\SSIM}{SSIM}
\newcommand{\weakto}{\mathrel{\rightharpoonup}}
\newcommand{\R}{\mathcal R}
\newcommand{\A}{\textgoth A}
\newcommand{\F}{\mathfrak F}
\newcommand{\U}{\mathfrak U}
\renewcommand{\leq}{\leqslant}
\renewcommand{\geq}{\geqslant}
\newtheorem{theorem}{Theorem}
\newtheorem{lemma}[theorem]{Lemma}
\theoremstyle{definition}
\newtheorem*{assumption*}{Assumption}
\newtheorem{remark}[theorem]{Remark}
\newtheorem*{remark*}{Remark}
\newtheorem*{definition*}{Definition}
\numberwithin{equation}{section}
\numberwithin{theorem}{section}
\title{Image reconstruction with imperfect forward models and applications in deblurring}
\author{Yury Korolev$^*$ \and Jan Lellmann\thanks{\{korolev,lellmann\}@mic.uni-luebeck.de}}
\date{Institute of Mathematics and Image Computing, University of L\"ubeck \\
Maria-Goeppert-Str.~3, 23562 L\"ubeck, Germany}
\begin{document}

\maketitle

\begin{abstract}
We present and analyse an approach to image reconstruction problems with imperfect forward models based on partially ordered spaces -- Banach lattices. In this approach, errors in the data and in the forward models are described using order intervals. The method can be characterised as the lattice analogue of the residual method, where the feasible set is defined by linear inequality constraints. The study of this feasible set is the main contribution of this paper. Convexity of this feasible set is examined in several settings and modifications for introducing additional information about the forward operator are considered. Numerical examples demonstrate the performance of the method in deblurring  with errors in the blurring kernel.
\end{abstract}

\textbf{Keywords: } inverse problems, imperfect forward models, residual method, deblurring, blind deblurring, deconvolution, blind deconvolution, uncertainty quantification

\section{Introduction}

The goal of image reconstruction is obtaining an image of the object of interest from indirectly measured, and typically noisy, data. Mathematically, image reconstruction problems are commonly formulated as inverse problems that can be written in the form of operator equations
\begin{equation}\label{Au=f}
Au = f,
\end{equation}
\noindent where $u \in \mathcal U$ is the unknown, $f \in \mathcal F$ is the measurements and $A\colon \mathcal U \to \mathcal F$ is a forward operator that models the data acquisition. In this paper, we consider linear forward operators and assume that equation~\eqref{Au=f} with exact data and operator has a unique solution that we denote by $\bar u$.

In practice, not only the right-hand side $f$ is noisy, but also the operator $A$ is often not exact as it contains errors that come from imperfect calibration measurements.

Uncertainty in the operator and in the data may be characterised by the inclusions $A \in \A$ and $f \in \F$ for some sets $\A \subset L(\mathcal U,\mathcal F)$ and $\F \subset \mathcal F$. These sets may be referred to as \emph{uncertainty sets} -- a concept widely used in robust optimisation~\cite{Bertsimas_rob_opt_2011}. Given $\A$ and $\F$, we would like to find a subset $\U \in U$, called the feasible set, that contains the exact solution $\bar u$. Depending on the particular form of the uncertainty sets $\A$ and $\F$ and on available additional a priori information about $\bar u$, the inclusion $\bar u \in \U$ can be proven for different feasible sets. Two main considerations that affect the choice of a particular feasible set are its size (smaller feasible sets are preferred) and the availability of efficient numerical algorithms for optimisation problems involving the feasible set (therefore, convex feasible sets are preferred). 

For ill-posed problems, in general, available feasible sets are too large and contain elements arbitrary far from $\bar u$. An exception is the case when a compact set that contains the exact solution $\bar u$ is known a priori~\cite{TGSYag}. In this case, a feasible set of finite diameter can be obtained. In the general case, an appropriate regularisation functional $\R$ needs to be minimised on the feasible set to find a stable approximation to $\bar u$.

This is the idea behind the residual method~\cite{IvanovVasinTanana, GrasmairHalmeierScherzer2011}. Operating in normed spaces, one can define the uncertainty sets as follows:
\begin{equation}\label{uncertainty_sets_norm}
\F := \{f \in \mathcal F \colon \norm{f-f_\delta} \leq \delta\}, \quad \A := \{A \in L(\mathcal U,\mathcal F) \colon \norm{A-A_h} \leq h\}
\end{equation}
\noindent for an approximate right-hand side $f_\delta$, approximate forward operator $A_h$ and approximation errors $\delta$ and $h$~\cite{IvanovVasinTanana}. Using the information in~\eqref{uncertainty_sets_norm}, one can define a feasible set as follows~\cite{IvanovVasinTanana}:
\begin{equation}\label{U_h_delta}
U_{h,\delta} = \{u \in \mathcal U \colon \norm{A_h u - f_\delta} \leq \delta + h\norm{u}\}.
\end{equation}
\noindent This set contains all elements of $\mathcal U$ that are consistent with~\eqref{Au=f} within the tolerances given by~\eqref{uncertainty_sets_norm}. The inclusion $\bar u \in U_{h,\delta}$ can be easily verified. Unless $h=0$ (i.e. the forward operator is exact), the set $U_{h,\delta}$ is non-convex and the residual method results in a non-convex optimisation problem even for convex regularisation functionals.

An alternative approach to modelling uncertainty in $A$ and $f$ using partially ordered spaces was proposed in~\cite{Kor_IP:2014, Kor_Yag_IP:2013}. Assume that $\mathcal U$ and $\mathcal F$ are Banach lattices, i.e. Banach spaces with partial order ``$\leq$'', and that $A$ is a regular operator~\cite{Schaefer}. Then, uncertainties in $A$ and $f$ can be characterised using intervals in appropriate partial orders, i.e.
\begin{equation}\label{bounds}
\F = \{f \in \mathcal F \colon f^l \leq f \leq f^u\}, \quad \A = \{A \in L^r(\mathcal U,\mathcal F) \colon A^l \leq A \leq A^u \},
\end{equation}
\noindent where $L^r(\mathcal U, \mathcal F) \subset L(\mathcal U, \mathcal F)$ is the space of regular operators $\mathcal U \to \mathcal F$. Assuming positivity of the exact solution $\bar u$ and using the inequalities in~\eqref{bounds}, we can show that the exact solution $\bar u$ is contained in the following feasible set~\cite{Kor_IP:2014}:
\begin{equation}\label{U}
U:=\{u \in \mathcal U \colon u \geq 0, \,\, A^l u \leq f^u, \,\, A^u u \geq f^l\}.
\end{equation}
In contrast to~\eqref{U_h_delta}, the set in \eqref{U} is convex and minimising a convex regularisation functional $\R$ on this set results in a convex optimisation problem:
\begin{equation}\label{optimisation_problem}
\min_{u\in \mathcal U} \R(u) \quad \text{s.t. } u \geq 0, \,\, A^l u \leq f^u, \,\, A^u u \geq f^l. 
\end{equation}
\noindent Using the relationship between partial orders and norms in Banach lattices, one can prove the inclusion of the partial-order-based feasible set $U$~\eqref{U} in the norm-based feasible set $U_{h,\delta}$~\eqref{U_h_delta} for appropriately chosen $A_h, u_\delta, h, \delta$.  We briefly review the partial-order-based approach in Section~\ref{section-IP_in_BL}.

While convergence of the minimisers of~\eqref{optimisation_problem} to the exact solution can be guaranteed~\cite{Kor_IP:2014}, it is not clear, whether the solution to \eqref{optimisation_problem} actually corresponds to a particular pair $(A,f)$ within the bounds~\eqref{bounds}. It seems more natural to look for approximate solutions in the following set:
\begin{equation}\label{U*}
U^* = \{u \in \mathcal U \colon u \geq 0, \,\, \exists A, \,\, A^l  \leq A \leq A^u, \,\, \exists f, \,\, f^l \leq f \leq f^u, \,\, Au=f\},
\end{equation}
i.e. to assume that, while the exact operator and noise-free measurements are unknown, there has to exist at least one pair within the uncertainty bounds~\eqref{bounds} that exactly explains the solution.

It is not clear a priori, whether the set $U^*$~\eqref{U*} is convex. In this paper we show that the sets $U$~\eqref{U} and $U^*$~\eqref{U*}, in fact, coincide, which implies convexity of $U^*$ (see Section~\ref{section-U=U*}) and shows that the convex problem~\eqref{optimisation_problem} actually implements the natural formulation~\eqref{U*}.

It is tempting to add an additional constraint on the operator in~\eqref{U*}, reflecting additional \emph{a priori} information about $A$. For instance, if $A$ is a blurring operator, then (after finite-dimensional approximation) the rows of the matrix $A$ should sum up to one, i.e. we should have that $Ae=e$ for two vectors of ones of appropriate lengths. More generally, one can define an additional linear constraint $Av=g$ for a fixed pair $(v,g)$ to obtain

\begin{equation}\label{U**}
\begin{aligned}
U^{**} = \{u \in \mathcal U \colon u \geq 0, \,\, \exists A, \,\, A^l  \leq A \leq A^u, \,\, Av=g, \\ \exists f, \,\, f^l \leq f \leq f^u, \,\, Au=f\} \subset U^*.
\end{aligned}
\end{equation}

Unfortunately, even such a simple constraint breaks the convexity of the feasible set. We demonstrate this in Section~\ref{section-U**} by explicitly describing the set~\eqref{U**} in finite dimensions in the special case when $f^l = f^u$. We also argue that the additional constraint $Av=g$ can still be useful to tighten the bounds $A^l, A^u$ if they weren't carefully chosen initially.

Since the set $\{A \colon Av=g\}$ is convex (in $A$), the analysis of Section~\ref{section-U**} shows that convexity of an additional constraint set $\mathcal A \subset \{A \colon A^l \leq A \leq A^u\}$ does not guarantee convexity of the corresponding feasible set in $u$. Because of this negative result, we confine ourselves to the set~\eqref{U} in our numerical experiments.

In Section~\ref{section-deblurring} we consider an application in image deblurring with uncertainty in the blurring kernel. In many applications, such as astronomy or fluorescence microscopy, the blurring kernel (often referred to as the point spread function) is obtained experimentally by recording light from reference stars~\cite{Telescope_PSF} or imaging subresolution fluorescent particles~\cite{Shaw_PSF_91}. Such blurring kernels inevitably contain errors that can significantly impact the reconstruction. Blind deconvolution~\cite{Chan_Wong_blind_deconvolution_98, Perrone_Favaro_blind_deconvolution_16} aims at reconstructing both the blurring kernel and the image simultaneously, but suffers from severe ill-posedness and non-convexity. The approach we propose takes into account the errors in the available blurring kernel (without attempting to obtain a better estimate of it) while staying within the convex setting.

\section{Brief overview of the partial-order-based approach}\label{section-IP_in_BL}
$L_p$ spaces, endowed with a partial order relation
\begin{equation*}
f \leq g \text{ iff } f(\cdot) \leq g(\cdot) \text{ a.e.},
\end{equation*}
\noindent become Banach lattices, i.e. partially ordered Banach spaces with well-defined suprema and infima of each pair of elements and a monotone norm~\cite{Schaefer}. If $\mathcal E$ and $\mathcal F$ are two Banach lattices, then partial orders in $\mathcal E$ and $\mathcal F$ induce a partial order in  a subspace of the space of linear operators acting from $\mathcal E$ to $\mathcal F$, namely in the space of regular operators. A linear operator $A \colon \mathcal E \to \mathcal F$ is called regular, if it can be represented as a difference of two positive operators. Positivity of an operator is defined as $A \geq 0$ iff $\forall x \in \mathcal E \,\, x \geq 0 \implies Ax \geq 0$. Partial order in the space of regular operators is introduced as follows: $A \geq B$ iff $A-B$ is a positive operator.  Every regular operator acting between two Banach lattices is continuous~\cite{Schaefer}.

 The framework of partially ordered functional spaces allows quantifying uncertainty in the data $f$ and forward operator $A$ of the inverse problem~\eqref{Au=f} by means of order intervals~\eqref{bounds}. Approximate solutions to~\eqref{Au=f} are the minimisers of~\eqref{optimisation_problem}. Convergence of these minimisers to the exact solution of~\eqref{Au=f} is studied as the uncertainty in the data $f$ and forward operator $A$ diminishes. This is formalised using monotone convergence sequences of lower and upper bounds~\eqref{bounds}:
 \begin{equation}\label{bounds_sequences}
 \begin{aligned}
& f^l_n, f^u_n \colon           \quad &f^l_n \leq f \leq f^u_n,         \quad   &f^l_{n+1} \geq f^l_n, \,\, f^u_{n+1} \leq f^u_n,          \quad   &\norm{f^u_n-f^l_n} \to 0, \\
&A^l_n, A^u_n \colon    \quad &A^l_n \leq A \leq A^u_n,         \quad   &A^l_{n+1} \geq A^l_n, \,\, A^u_{n+1} \leq A^u_n,  \quad   &\norm{A^u_n-A^l_n} \to 0.
\end{aligned}
\end{equation}
\noindent If a sequence of lower (upper) bounds is not monotone, it can always be made monotone by consequently taking the supremum (infimum) of each element in the sequence with the preceding one.

Convergence of the corresponding sequence of minimisers $u_n$ of~\eqref{optimisation_problem} to $\bar u$ is guaranteed by the following theorem~\cite{Kor_IP:2014}:
\begin{theorem}\label{thm-convergence}
Let $\mathcal U$ and $\mathcal F$ be Banach lattices and $\mathcal F$ order complete\footnote{A Banach lattice $\mathcal U$ is called order complete if every majorised set in $\mathcal U$ has a supremum.}. Suppose that the regulariser $\mathcal R$ satisfies the following assumptions:
\begin{itemize}
\item $\mathcal R$ is bounded from below on $\mathcal U$; 
\item $\mathcal R$ is lower semi-continuous; 
\item non-empty sub-level sets   $lev_C(\mathcal R) = \{u \colon \mathcal R(u) \leq C\}$ are strongly sequentially compact.
\end{itemize}
Then $u_n \to \bar u$ strongly in $U$.
\end{theorem}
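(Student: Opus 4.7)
The plan is to adapt the standard convergence argument for the residual method to the Banach lattice setting. In outline: (i) the exact solution $\bar u$ is feasible for every $n$, so optimality gives a uniform bound on $\mathcal R(u_n)$ and the compactness hypothesis produces a strongly convergent subsequence; (ii) norm convergence of the operator and data bounds lets us pass to the limit in the inequality constraints and conclude that any strong cluster point solves $Au = f$ exactly; (iii) uniqueness of $\bar u$ then identifies the limit, and the usual subsequence principle upgrades subsequential convergence to convergence of the full sequence.

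For step~(i), the bounds $A_n^l \leq A \leq A_n^u$, $f_n^l \leq f \leq f_n^u$ together with $\bar u \geq 0$ and $A \bar u = f$ yield immediately $A_n^l \bar u \leq f \leq f_n^u$ and $A_n^u \bar u \geq f \geq f_n^l$, so $\bar u$ is admissible for~\eqref{optimisation_problem} for every $n$. Optimality of $u_n$ then forces $\mathcal R(u_n) \leq \mathcal R(\bar u)$, so the sequence $\{u_n\}$ lies in the non-empty, strongly sequentially compact sublevel set $lev_{\mathcal R(\bar u)}(\mathcal R)$, and we extract $u_{n_k} \to u^*$ strongly in $\mathcal U$.

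For step~(ii), order completeness of $\mathcal F$ makes $L^r(\mathcal U, \mathcal F)$ a Banach lattice with a monotone (regular) norm. From $0 \leq A - A_n^l \leq A_n^u - A_n^l$ one reads off $\|A - A_n^l\| \leq \|A_n^u - A_n^l\| \to 0$, and analogously $\|A_n^u - A\|, \|f_n^l - f\|, \|f_n^u - f\| \to 0$. Writing
\begin{equation*}
A_{n_k}^l u_{n_k} - A u^* = A_{n_k}^l (u_{n_k} - u^*) + (A_{n_k}^l - A) u^*
\end{equation*}
and using boundedness of $A_{n_k}^l$ together with $u_{n_k} \to u^*$, the right-hand side tends to zero, so $A_{n_k}^l u_{n_k} \to A u^*$; the same argument gives $A_{n_k}^u u_{n_k} \to A u^*$. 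Since the positive cone of $\mathcal F$ is norm-closed, taking limits in the admissibility inequalities produces $A u^* \leq f$ and $A u^* \geq f$, hence $A u^* = f$; closedness of the cone also preserves $u^* \geq 0$. Uniqueness of the exact solution then forces $u^* = \bar u$, and since every subsequence has the same cluster point, the full sequence $u_n$ converges strongly to $\bar u$.

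The main obstacle is precisely the norm convergence $A_n^l, A_n^u \to A$: without it, the operator terms in the inequality constraints cannot be transferred to the limit and the argument collapses. This is exactly what the order completeness of $\mathcal F$ buys, since it ensures that the regular norm on $L^r(\mathcal U, \mathcal F)$ is monotone, so two-sided bracketing $A_n^l \leq A \leq A_n^u$ with $\|A_n^u - A_n^l\| \to 0$ automatically controls $A$. As a minor aside, lower semi-continuity of $\mathcal R$ is not strictly needed for $u_n \to \bar u$ itself (uniqueness of the exact solution does that); it would only become essential if one additionally wanted $\mathcal R(u_n) \to \mathcal R(\bar u)$ or dropped uniqueness of $\bar u$.
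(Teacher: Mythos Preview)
The paper does not supply its own proof of this theorem; it is quoted verbatim from the earlier reference~\cite{Kor_IP:2014}, so there is nothing in the present paper to compare against. Your argument is the standard residual-method convergence proof carried over to the Banach-lattice setting, and it is correct: feasibility of $\bar u$ gives the uniform bound $\mathcal R(u_n)\leq\mathcal R(\bar u)$; compactness of the sublevel set yields a strongly convergent subsequence; order completeness of $\mathcal F$ makes $L^r(\mathcal U,\mathcal F)$ a Banach lattice under the $r$-norm, so $0\leq A-A_n^l\leq A_n^u-A_n^l$ forces $\|A-A_n^l\|\to 0$ (note that for positive operators the $r$-norm and the operator norm coincide, which is what makes this step go through); passing to the limit in the closed positive cone gives $Au^*=f$, and uniqueness plus the subsequence principle finishes. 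One small caveat on your closing aside: lower semicontinuity of $\mathcal R$ is what guarantees that the minimisers $u_n$ actually exist (via the direct method on the closed feasible set intersected with a compact sublevel set), so while it is true that lsc is not invoked in the convergence argument proper, it cannot simply be dropped from the hypotheses.
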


The assumptions on the regulariser in Theorem~\ref{thm-convergence} are rather standard. 
Conditions of Theorem~\ref{thm-convergence} are satisfied, for example, if $\mathcal U=L_1$ and $\mathcal R(u) = \norm{u}_1 + TV(u)$. The term $\norm{u}_1$ can be dropped if boundedness of the $L_1$-norm can be guaranteed for all $u\in U$~\eqref{U}. The term $TV(u)$ can be replaced by any topologically equivalent seminorm, such as $TGV^2(u)$~\cite{bredies2009tgv} (see~\cite{l1tgv} for a proof of topological equivalence) or $TVL^p(u)$~\cite{Burger_TVLp_2016}. 

Strong compactness of the sub-level sets of $\mathcal R$ can be replaced by weak compactness if $\mathcal R$ has the Radon-Riesz property, i.e. that for any sequence $v_n \in \mathcal U$ weak convergence $v_n \weakto v$ along with convergence of the values $\mathcal R(v_n) \to \mathcal R(v)$ implies strong convergence $v_n \to v$. With this modification, Theorem~\ref{thm-convergence} admits norms in reflexive Banach spaces as regularisers.

The constraint $u \geq 0$ in~\eqref{U} is important. It can be relaxed to $u \geq a$ for some $a \in \mathcal U$ (not necessarily $\geq 0$), with some modifications in the formulae~\cite{Kor_Yag_IP:2013}, provided that the exact solution $\bar u$ satisfies this constraint. If the exact solution may be unbounded from below, the method won't work.

Let us briefly discuss the inclusion of the partial-order-based feasible set $U$~\eqref{U} in the norm-based feasible set $U_{h,\delta}$~\eqref{U_h_delta}. Let us choose
\begin{equation}\label{A_h_u_delta_from_bounds}
A_{h_n} = \frac{A^u_n+A^l_n}{2}, \quad f_{\delta_n} = \frac{f^u_n+f^l_n}{2}, \quad h_n=\frac{\norm{A^u_n-A^l_n}}{2}, \quad \delta_n = \frac{\norm{f^u_n-f^l_n}}{2}.
\end{equation}

\noindent It is easy to verify that $\norm{A-A_{h_n}} \leq h_n$, $\norm{f-f_{\delta_n}} \leq \delta_n$ and $(h_n,\delta_n) \to 0$ as $n \to \infty$. Indeed,  we note that, since $\forall n \,\, A^l_n \leq A \leq A^u_n$, we get
\begin{equation*}
-\frac{A^u_n-A^l_n}{2} \leq \frac{A^u_n+A^l_n}{2} - A \leq \frac{A^u_n-A^l_n}{2}
\end{equation*}
\noindent and therefore
\begin{equation} \label{modulus_A_h-A}
\left|\frac{A^u_n+A^l_n}{2} - A\right| \leq \frac{A^u_n-A^l_n}{2}.
\end{equation}
\noindent Since the space of regular operators $\mathcal U \to \mathcal F$ with $\mathcal F$ order complete is a Banach lattice under the so-called $r$-norm $\norm{A}_r = \norm{|A|}$~\cite{Abramovich} and the $r$-norm is always greater or equal to the operator norm~\cite{Schaefer},~\eqref{modulus_A_h-A} implies
\begin{equation*}
\norm{A_{h_n}-A} \leq \norm{A_{h_n}-A}_r \leq \frac{\norm{A^u_n-A^l_n}_r}{2} = \frac{\norm{A^u_n-A^l_n}}{2} = h_n.
\end{equation*}

The inequality $\norm{f-f_{\delta_n}} \leq \delta_n$ can be shown analogously and $(h_n,\delta_n) \to 0$ follows from~\eqref{bounds_sequences}. The proof of the inclusion of~\eqref{U} in~\eqref{U_h_delta} with $A_h,u_\delta,h,\delta$ as defined in~\eqref{A_h_u_delta_from_bounds} can be found in~\cite[Thm. 2]{Kor_IP:2014}. Therefore, the partial-order-based feasible set~\eqref{U} is contained in the norm-based feasible set~\eqref{U_h_delta} if the approximate operator, the approximate right-hand side and the approximation errors are as in~\eqref{A_h_u_delta_from_bounds}.

\section{Equivalence of $U$ and $U^*$}\label{section-U=U*}
Theorem~\ref{thm-convergence} guarantees convergence of approximate solutions chosen from the partial-order-based feasible set~$U$~\eqref{U} by minimizing a regulariser over $U$ as in~\eqref{optimisation_problem}. It is not clear, however, whether the minimisers  solve~\eqref{Au=f} for any particular pair $(A,f)$ within the bounds~\eqref{bounds}. In this section, we give a positive answer to this question for regular integral operators~\cite{Abramovich} acting between two spaces $\mathcal E$ and $\mathcal F$ of $(\mathcal S, \Sigma_1, \mu)$- and $(\mathcal T, \Sigma_2, \nu)$-measurable functions, respectively, where $\mathcal S$ and $\mathcal T$ are sets, $\Sigma_1$ and $\Sigma_2$ are $\sigma$-algebras over these sets and $\mu$ and $\nu$ are measures. A linear operator $A \colon \mathcal E \to \mathcal F$ is called an integral operator, if there exists a jointly measurable function $K(\cdot,\cdot)$ such that for each $u \in \mathcal E$ we have
\begin{equation*}
Au(t) = \int_{\mathcal S} K(s,t) u(s)  \, d\mu(s)
\end{equation*}
\noindent for $\nu$-almost all $t \in \mathcal T$. An integral operator $A \colon \mathcal E \to \mathcal F$ is regular if and only if the operator
\begin{equation*}
|A|u(t) \defeq \int_{\mathcal S} |K(s,t)| u(s)  \, d\mu(s)
\end{equation*}
\noindent has range in $\mathcal F$~\cite[Thm. 5.11]{Abramovich}.       

\begin{theorem}\label{thm-U_in_U*}
Let $\mathcal U$ and $\mathcal F$ in~\eqref{Au=f} be spaces of $(\mathcal S, \Sigma_1, \mu)$- and $(\mathcal T, \Sigma_2, \nu)$-measurable functions, respectively. Let $A^l$, $A^u$ be regular integral operators, $A^l \leq A^u$ and let $U$ be as defined in~\eqref{U}. Then for every $u \in U$ there exist a regular operator $A$, $A^l \leq A \leq A^u$, and $f \in \mathcal F$, $f^l \leq f \leq f^u$, such that $Au=f$.
\end{theorem}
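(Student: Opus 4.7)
The plan is to construct $A$ as a pointwise-in-$t$ convex combination of $A^l$ and $A^u$. Let $K^l, K^u$ denote their kernels; since $A^u - A^l \geq 0$ is itself a regular integral operator with kernel $K^u - K^l$, positivity forces $K^l(s,t) \leq K^u(s,t)$ for $\mu \otimes \nu$-a.e.\ $(s,t)$. For a measurable function $\lambda : \mathcal T \to [0,1]$ (to be chosen), I set
\[
K(s,t) := (1-\lambda(t))\, K^l(s,t) + \lambda(t)\, K^u(s,t).
\]
This kernel is jointly measurable and satisfies $K^l \leq K \leq K^u$ pointwise, and yields a well-defined integral operator $A$; the bound $|K| \leq |K^l| + |K^u|$ together with \cite[Thm.~5.11]{Abramovich} gives regularity of $A$, and the pointwise kernel inequalities translate back into $A^l \leq A \leq A^u$ in the order of regular operators.

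It remains to choose $\lambda$ so that $f := Au$ lies in $[f^l, f^u]$. Writing $g(t) := (A^u u)(t) - (A^l u)(t) \geq 0$ (using $u \geq 0$ and $A^l \leq A^u$), linearity gives
\[
(Au)(t) = (A^l u)(t) + \lambda(t)\, g(t),
\]
so I need $f^l(t) - (A^l u)(t) \leq \lambda(t)\, g(t) \leq f^u(t) - (A^l u)(t)$. The defining inequalities of $U$ supply $(A^l u)(t) \leq f^u(t)$ and $(A^u u)(t) \geq f^l(t)$; combined with $f^l \leq f^u$, this shows that on the set $\{g > 0\}$ the admissible values of $\lambda(t)$ form a non-empty subinterval of $[0,1]$, while on $\{g = 0\}$ we have $(A^l u)(t) = (A^u u)(t) \in [f^l(t), f^u(t)]$, so every $\lambda(t) \in [0,1]$ is admissible there.

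An explicit measurable choice is $\lambda(t) := \max\bigl\{0,\, (f^l(t) - (A^l u)(t))/g(t)\bigr\}$ on $\{g > 0\}$ and $\lambda(t) := 0$ on $\{g = 0\}$; measurability of $A^l u$, $A^u u$, $f^l$, $f^u$ makes $\lambda$ measurable with values in $[0,1]$, and $f := Au$ is then the desired right-hand side. The only subtle point in the plan is the opening step, namely the identification of the order on regular integral operators with the a.e.\ order on their kernels, so that pointwise convex combinations of kernels produce a regular operator squeezed between $A^l$ and $A^u$; once that is in place, the remainder is a pointwise verification plus a straightforward measurable selection.
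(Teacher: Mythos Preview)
Your proposal is correct and follows essentially the same route as the paper: construct $A$ via a $t$-dependent convex combination $K(s,t)=(1-\lambda(t))K^l(s,t)+\lambda(t)K^u(s,t)$, then choose the measurable weight $\lambda$ so that $Au$ lands in $[f^l,f^u]$, with the same explicit selection $\lambda(t)=\max\{0,(f^l(t)-A^lu(t))/g(t)\}$. Your case split $\{g>0\}$ versus $\{g=0\}$ is a slightly cleaner way of handling the vanishing denominator than the paper's phrasing, but the argument is otherwise identical.
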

\begin{proof}
Since $A^l$ and $A^u$ are integral operators, there exist jointly measurable functions $K^l(\cdot,\cdot)$ and $K^u(\cdot,\cdot)$ such that
\begin{equation*}
A^lu(t) = \int_{\mathcal S} K^l(s,t) u(s)  \, d\mu(s) \quad \text{and} \quad A^u u(t) = \int_{\mathcal S} K^u(s,t) u(s)  \, d\mu(s)
\end{equation*}
\noindent for $\nu$-almost all $t \in \mathcal T$ and by~\cite[Thm. 5.5]{Abramovich} we have $K^l(s,t) \leq K^u(s,t)$ for $\mu \times \nu$-almost all $(s,t) \in \mathcal S \times \mathcal T$.

Note that, as an immediate consequence of~\cite[Thm.~5.9]{Abramovich}, every operator $A$ that satisfies $A^l \leq A \leq A^u$ is an integral operator and therefore there exists a jointly measurable function $K(\cdot,\cdot)$ such that
\begin{equation*}
Au(t) = \int_{\mathcal S} K(s,t) u(s)  \, d\mu(s)
\end{equation*}
\noindent and
\begin{equation*}
K^l(s,t) \leq K(s,t) \leq K^u(s,t)
\end{equation*}
\noindent for $\mu \times \nu$-almost all $(s,t) \in \mathcal S \times \mathcal T$.

Let us choose a $\nu$-measurable function $\alpha(\cdot)$ such that $0 \leq \alpha(\cdot) \leq 1$ $\nu$-a.e. and define
\begin{equation*}
K(s,t) = (1-\alpha(t))K^l(s,t) + \alpha(t) K^u(s,t).
\end{equation*}

Note that such choice of $\alpha(t)$ does not capture all measurable functions $K(s,t)$ such that $K^l(s,t) \leq K(s,t) \leq K^u(s,t)$ (a choice of a jointly measurable $\alpha(s,t)$ would do that), but it will suffice for our existence proof.
Obviously, such choice of $K(s,t)$ defines an integral operator $A$ that satisfies $A^l \leq A \leq A^u$.

Fix $u^* \in U$ and define 
\begin{equation*}
f \defeq Au^* = \int_{\mathcal S} K(s,t) u^*(s) \, d\mu(s).
\end{equation*}
Our goal is to find $\alpha(\cdot)$ such that $0 \leq \alpha(\cdot) \leq 1$ and $f^l \leq f \leq f^u$, i.e.
\begin{equation*}
\left\{
\begin{aligned}
& f^l(t) \leq \int_{\mathcal S} [(1-\alpha(t))K^l(s,t) + \alpha(t) K^u(s,t)] u^*(s) \, d\mu(s) \leq f^u(t), \\
& 0 \leq \alpha(t) \leq 1
\end{aligned}
\right.
\end{equation*}
\noindent for $\nu$-almost all $t \in \mathcal T$. If $K^l(s,t) = K^u(s,t)$ $\mu$-almost everywhere, the inequalities above are trivially satisfied. Otherwise, we rewrite them as follows:
\begin{equation*}
\left\{
\begin{aligned}
 \frac{f^l(t) - \int_{\mathcal S} K^l(s,t) u^*(s) \, d\mu(s)}{\int_{\mathcal S} [K^u(s,t) -  K^l(s,t)] u^*(s) \, d\mu(s)}  &\leq 
& \alpha(t) &\leq 
\frac{f^u(t) - \int_{\mathcal S} K^l(s,t) u^*(s) \, d\mu(s)}{\int_{\mathcal S} [K^u(s,t) -  K^l(s,t)] u^*(s) \, d\mu(s)}, \\
 0 &\leq & \alpha(t) &\leq 1
\end{aligned}
\right.
\end{equation*}
\noindent or
\begin{equation}\label{ineq_alpha}
\begin{aligned}
\sup\left\{\frac{f^l(t) - \int_{\mathcal S} K^l(s,t) u^*(s) \, d\mu(s)}{\int_{\mathcal S} [K^u(s,t) -  K^l(s,t)] u^*(s) \, d\mu(s)},0\right\}  \leq 
 \alpha(t) \\
 \leq \inf\left\{\frac{f^u(t) - \int_{\mathcal S} K^l(s,t) u^*(s) \, d\mu(s)}{\int_{\mathcal S} [K^u(s,t) -  K^l(s,t)] u^*(s) \, d\mu(s)},1\right\}.
\end{aligned}
\end{equation}

This system has a solution if the first operand in the supremum in~\eqref{ineq_alpha}  is $\leq 1$ and the first operand in the infimum is $\geq 0$ $\nu$-a.e.~in $\mathcal T$. This is indeed the case as a consequence of the conditions~\eqref{bounds} and the inequalities $f^l \leq f^u$ and $A^l \leq A^u$. Therefore, we can always find a measurable $\alpha(\cdot)$ satisfying~\eqref{ineq_alpha}, for example, by choosing 
\begin{equation*}
\alpha(t) = \sup\left\{\frac{f^l(t) - \int_{\mathcal S} K^l(s,t) u^*(s) \, d\mu(s)}{\int_{\mathcal S} [K^u(s,t) -  K^l(s,t)] u^*(s) \, d\mu(s)},0\right\}, 
\end{equation*}
\noindent which is a supremum of two measurable functions and therefore measurable. In the special case $f^l = f^u = f$ we get a unique solution
\begin{equation*}
\alpha(t) =
\frac{f(t) - \int_{\mathcal S} K^l(s,t) u^*(s) \, d\mu(s)}{\int_{\mathcal S} [K^u(s,t) -  K^l(s,t)] u^*(s) \, d\mu(s)} \in [0,1] \text{ a.e. in $\mathcal T$}.
\end{equation*}
Hence, we have found a pair $(A,f)$ within the bounds~\eqref{bounds} for an arbitrary $u^* \in U$ such that $A u^* = f$.
\end{proof}

Theorem~\ref{thm-U_in_U*} proves the inclusion $U \subseteq U^*$, where $U^*$ is as defined in~\eqref{U*}. The opposite inclusion holds as well, since for any $u \in U^*$, with the corresponding pair $(A,f)$ from $U^*$ we have
\begin{equation*}
\left\{
\begin{aligned}
f^l &\leq f= Au \leq f^u, \\
A^lu &\leq f = Au \leq A^u u
\end{aligned}
\right.
\end{equation*}
due to the positivity of $u$, and hence $A^u u \geq f^l$ and $A^l u \leq f^u$. Therefore, we have proven the following
\begin{theorem}\label{thm-U=U*}
Under the assumptions of Theorem~\ref{thm-U_in_U*}, the sets $U$ and $U^*$ defined in~\eqref{U} and~\eqref{U*}, respectively, coincide.
\end{theorem}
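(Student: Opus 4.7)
My plan is to split the equality into two inclusions. The direction $U \subseteq U^*$ is delivered essentially for free by Theorem~\ref{thm-U_in_U*}: given $u \in U$, that theorem produces a regular operator $A$ with $A^l \leq A \leq A^u$ and a function $f \in \mathcal F$ with $f^l \leq f \leq f^u$ such that $Au=f$, which is precisely what it means for $u$ to belong to $U^*$ as defined in~\eqref{U*}. So the only thing remaining to prove is the reverse inclusion $U^* \subseteq U$.

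For that direction, I would pick an arbitrary $u \in U^*$ and pull out witnesses $(A,f)$ from the defining quantifiers in~\eqref{U*}, so that $A^l \leq A \leq A^u$, $f^l \leq f \leq f^u$, and $Au = f$. Positivity of $u$ is already embedded in the definition of $U^*$, hence the first clause $u \geq 0$ in the definition~\eqref{U} of $U$ is automatic. The remaining two inequalities in~\eqref{U} are obtained by combining the operator inequalities with positivity of $u$: since $A-A^l$ and $A^u-A$ are positive operators and $u \geq 0$, we have $(A-A^l)u \geq 0$ and $(A^u-A)u \geq 0$, giving $A^l u \leq Au = f \leq f^u$ and $A^u u \geq Au = f \geq f^l$, so $u \in U$.

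The entire substance of the equivalence is concentrated in Theorem~\ref{thm-U_in_U*}; the reverse inclusion is a three-line verification from the definitions of the partial orders in $\mathcal F$ and in the space of regular operators, and I do not expect any obstacle there.
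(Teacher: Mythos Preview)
Your proposal is correct and matches the paper's own argument essentially line for line: the paper also cites Theorem~\ref{thm-U_in_U*} for $U \subseteq U^*$ and then, for $U^* \subseteq U$, picks the witnessing pair $(A,f)$ and uses positivity of $u$ together with $A^l \leq A \leq A^u$ and $f^l \leq f \leq f^u$ to obtain $A^l u \leq Au = f \leq f^u$ and $A^u u \geq Au = f \geq f^l$.
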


An immediate consequence of this result is the convexity of the set $U^*$, since the set $U$ is, obviously, convex. An advantage of the formulation~\eqref{U} is the ease of implementation in an optimisation algorithm. On the other hand, the formulation~\eqref{U*} allows to easily include \emph{a priori} information on the operator $A$ as additional constraints, cf.~\eqref{U**}. 

\section{Imposing further constraints on the operator}\label{section-U**}
It is a natural question to ask, whether under some additional constraints on $A$ the feasible set $U^*$~\eqref{U*} remains convex (in~$u$). In this section we answer this question negatively in the case when the additional constraint is linear. We restrict ourselves to the finite-dimensional case when $\mathcal U = \mathbb R^n$, $\mathcal F = \mathbb R^m$, and $A$ is an $m \times n$ matrix. Note that in the finite-dimensional case partial order in the space of regular operators coincides with the elementwise partial order for matrices, i.e. $A \leq B$ iff $a_{ij} \leq b_{ij} \,\, \forall i,j$. Without loss of generality, we also restrict ourselves the special case $f^l=f^u=f$.

Fix a pair $(v,g) \in \mathbb R^n \times \mathbb R^m$ such that $v \geq 0$ and
\begin{equation}\label{bounds_vg}
A^l v \leq g \leq A^u v 
\end{equation}
\noindent and consider the set
\begin{equation}\label{U**_fin}
\begin{aligned}
U^{**} = \{u \in \mathbb R^n \colon u \geq 0, \,\, \exists A \in \mathbb R^{m \times n}, \,\, A^l  \leq A \leq A^u, \,\, Av=g, \,\, Au=f\}.
\end{aligned}
\end{equation}

As noted in the introduction, the additional constraint $Av=g$ can be useful, for example, if the exact forward operator is a convolution operator, i.e. all rows of the matrix $A$ sum up to one. This additional constraint allows us to further restrict the feasible set, while still preserving the inclusion $\bar u \in U^{**}$. Intuitively, a tighter feasible set provides more information about the exact solution and can be expected to improve the reconstructions. 

While the inclusion of $U^{**}$~\eqref{U**_fin} in the convex set $U$~\eqref{U}, obviously, still holds, the opposite inclusion does not hold any more. In what follows, we derive an explicit description of the set $U^{**}$ and argue that this set is not convex. Therefore, the advantages in reconstruction quality offered by using a tighter feasible set come at a price of a significant increase in computational complexity.

\paragraph{The structure of $U^{**}$} Every matrix $A$, $A^l  \leq A \leq A^u$, can be written as
\begin{equation*}
a_{i,j} = (1-\alpha_{i,j})a^l_{i,j} + \alpha_{i,j} a^u_{i,j}
\end{equation*}
\noindent with $\alpha_{i,j} \in [0,1]$. Fix $u \in U$. The constraints $Au=f$ and $Av=g$ can be written as
\begin{equation*}
\left\{
\begin{aligned}
\sum_{j=1}^n ((1-\alpha_{i,j})a^l_{i,j} + \alpha_{i,j} a^u_{i,j}) u_{i,j} = f_i, \\
\sum_{j=1}^n ((1-\alpha_{i,j})a^l_{i,j} + \alpha_{i,j} a^u_{i,j}) v_{i,j} = g_i \\
\end{aligned}
\right.
\end{equation*}
\noindent for each row $i = 1, \dots, m$. In what follows we will drop the subscript $i$ and consider this system for each row separately:
\begin{equation*}
\left\{
\begin{aligned}
\sum_{j=1}^n ((1-\alpha_j) a^l_j + \alpha_j a^u_j) u_j = f, \\
\sum_{j=1}^n ((1-\alpha_j)a^l_j + \alpha_j a^u_j) v_j = g. \\
\end{aligned}
\right.
\end{equation*}
\noindent or, equivalently,
\begin{equation}\label{system_alpha}
\begin{pmatrix}
(a^u_1-a^l_1)u_1 & \cdots & (a^u_n-a^l_n)u_n  \\
(a^u_1-a^l_1)v_1 & \cdots & (a^u_n-a^l_n)v_n  \\
\end{pmatrix}
\begin{pmatrix}
\alpha_1 \\
\vdots \\
\alpha_n
\end{pmatrix}
= 
\begin{pmatrix}
f - \sum_{j=1}^n a^l_j u_j \\
g - \sum_{j=1}^n a^l_j v_j
\end{pmatrix}.
\end{equation}

The matrix and the right-hand side in~\eqref{system_alpha} have non-negative entries due to~\eqref{U}, \eqref{bounds_vg} and the inequality $A^u \geq A^l$. Our goal is to find conditions on $u$ under which this system has a solution $\alpha \in [0,1]$. We will use  Farkas' lemma~\cite{Rockafellar} to find out, when the system~\eqref{system_alpha} has a positive solution. To find out, when it has a solution $\leq 1$, we reformulate~\eqref{system_alpha} in terms of $\beta = 1-\alpha$, which gives us the following system
\begin{equation}\label{system_beta}
\begin{pmatrix}
(a^u_1-a^l_1)u_1 & \cdots & (a^u_n-a^l_n)u_n  \\
(a^u_1-a^l_1)v_1 & \cdots & (a^u_n-a^l_n)v_n  \\
\end{pmatrix}
\begin{pmatrix}
\beta_1 \\
\vdots \\
\beta_n
\end{pmatrix}
= 
\begin{pmatrix}
\sum_{j=1}^n a^u_j u_j - f \\
\sum_{j=1}^n a^u_j v_j - g
\end{pmatrix},
\end{equation}
\noindent which also has a positive right-hand side due to~\eqref{U} and~\eqref{bounds_vg}. Combining these two systems, we get:
\begin{equation}\label{system_alpha_beta}
\begin{aligned}
\begin{pmatrix}
(a^u_1-a^l_1)u_1 & \cdots & (a^u_n-a^l_n)u_n  & 0 & \cdots & 0  \\
0 & \cdots & 0 & (a^u_1-a^l_1)u_1 & \cdots & (a^u_n-a^l_n)u_n  \\
(a^u_1-a^l_1)v_1 & \cdots & (a^u_n-a^l_n)v_n & 0 & \cdots & 0 \\
0 & \cdots & 0 & (a^u_1-a^l_1)v_1 & \cdots & (a^u_n-a^l_n)v_n \\
1 & \cdots & 0 & 1 & \cdots & 0 \\
\vdots & \ddots & \vdots & \vdots & \ddots & \vdots \\
0 & \cdots & 1 & 0 & \cdots & 1 
\end{pmatrix}
\begin{pmatrix}
\alpha_1 \\
\vdots \\
\alpha_n \\
\beta_1 \\
\vdots \\
\beta_n
\end{pmatrix}
\\ = 
\begin{pmatrix}
f - \sum_{j=1}^n a^l_j u_j \\
\sum_{j=1}^n a^u_j u_j - f \\
g - \sum_{j=1}^n a^l_j v_j \\
\sum_{j=1}^n a^u_j v_j - g \\
1 \\
\vdots \\
1
\end{pmatrix}.
\end{aligned}
\end{equation}

  The last $n$ lines in this system enforce the constraint $\beta = 1 - \alpha$, which guarantees that we find conditions under which the system~\eqref{system_alpha} has a solution that is simultaneously $\geq 0$ and $\leq 1$. The system~\eqref{system_alpha} has a solution in $[0,1]$ if and only if the system~\eqref{system_alpha_beta} has a solution $\geq 0$.

Our goal is to find the conditions on $u$, under which the system~\eqref{system_alpha_beta} has a non-negative solution. Farkas' lemma gives us the following alternative: either~\eqref{system_alpha_beta} has a solution $\geq 0$ or there exists a vector $y = (y_1, \cdots, y_{n+4})$ such that
\begin{equation}\label{A^Ty>=0}
\begin{aligned}
\begin{pmatrix}
(a^u_1-a^l_1)u_1 & 0 & (a^u_1-a^l_1)v_1  & 0  & 1 & \cdots & 0  \\
\vdots & \vdots & \vdots & \vdots & \vdots & \vdots & \vdots \\
(a^u_n-a^l_n)u_n & 0 & (a^u_n-a^l_n)v_n  & 0  & 0 & \cdots & 1  \\
0 & (a^u_1-a^l_1)u_1 & 0  & (a^u_1-a^l_1)v_1  & 1 & \cdots & 0  \\
\vdots & \vdots & \vdots & \vdots & \vdots & \vdots & \vdots \\
0 & (a^u_n-a^l_n)u_n & 0  & (a^u_n-a^l_n)v_n  & 0 & \cdots & 1  \\
\end{pmatrix}
\begin{pmatrix}
y_1 \\
\vdots \\
y_{n+4}
\end{pmatrix}
 \geq 
\begin{pmatrix}
0 \\
\vdots \\
0
\end{pmatrix}
\end{aligned}
\end{equation}
\noindent and
\begin{equation}\label{(y,b)<0}
\begin{aligned}
y_1 \left(f - \sum_{j=1}^n a^l_j u_j\right) + y_2 \left(\sum_{j=1}^n a^u_j u_j - f\right) + y_3 \left(g - \sum_{j=1}^n a^l_j v_j\right) \\
+ y_4 \left(\sum_{j=1}^n a^u_j v_j - g\right) +
\sum_{j=1}^n y_{j+4} < 0.
\end{aligned}
\end{equation}

We can rewrite these conditions equivalently as follows:
\begin{eqnarray}
(a^u_j-a^l_j) (u_j y_1 + v_j y_3) + y_{j+4} \geq 0, \quad j=1,\dots,n, \label{eq_1} \\ 
(a^u_j-a^l_j) (u_j y_2 + v_j y_4) + y_{j+4} \geq 0, \quad j=1,\dots,n, \label{eq_2} \\ 
\left(f- \sum_{j=1}^n a^l_j u_j\right)y_1 + \left(\sum_{j=1}^n a^u_j u_j - f\right)y_2 + \left(g-\sum_{j=1}^n a^l_j v_j\right) y_3 \nonumber \\ 
 + \left(\sum_{j=1}^n a^u_j v_j - g\right)y_4 + \sum_{j=1}^n y_{j+4} < 0. \label{eq_3}
\end{eqnarray}

The proof will be based on considering various combinations of signs of $(y_1-y_2)$ and $(y_3-y_4)$ separately. First we prove the following
\begin{lemma}\label{lemma}
If a solution of the system~\eqref{eq_1}--\eqref{eq_3} exists, it satisfies the inequality $(y_1-y_2)(y_3-y_4)<0$.
\end{lemma}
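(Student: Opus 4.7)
My plan is to argue by contradiction: assume a solution $y$ of \eqref{eq_1}--\eqref{eq_3} exists with $(y_1-y_2)(y_3-y_4) \geq 0$, and derive an inequality that contradicts itself. To set up notation I would write $p = y_1 - y_2$, $q = y_3 - y_4$, $d_j = a^u_j - a^l_j \geq 0$, and denote the (nonnegative) numbers
$I_u = \sum_j a^l_j u_j$, $S_u = \sum_j a^u_j u_j$, $I_v = \sum_j a^l_j v_j$, $S_v = \sum_j a^u_j v_j$. The membership $u \in U$ and the assumption \eqref{bounds_vg} on $(v,g)$ give $I_u \leq f \leq S_u$ and $I_v \leq g \leq S_v$, so that $\sum_j d_j u_j = S_u - I_u$ and $\sum_j d_j v_j = S_v - I_v$.

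The key step is to combine \eqref{eq_1} and \eqref{eq_2} into the single bound
\begin{equation*}
y_{j+4} \;\geq\; -d_j\,\min\bigl\{u_j y_1 + v_j y_3,\; u_j y_2 + v_j y_4\bigr\},
\end{equation*}
apply $\min(a,b) = \tfrac{1}{2}(a+b) - \tfrac{1}{2}|a-b|$, and sum over $j$. Feeding the resulting lower bound on $\sum_j y_{j+4}$ into \eqref{eq_3}, all of the $(y_1+y_2)$ and $(y_3+y_4)$ terms must recombine with the $f y_1, f y_2, g y_3, g y_4$ contributions to give only $p$ and $q$ on the right. A direct coefficient computation (grouping by $y_1,\dots,y_4$) should collapse this to
\begin{equation*}
\tfrac{1}{2}\sum_j d_j\,\bigl|u_j p + v_j q\bigr| \;<\; \alpha\, p + \beta\, q,
\qquad \alpha = \tfrac{S_u + I_u}{2} - f,\;\; \beta = \tfrac{S_v + I_v}{2} - g.
\end{equation*}
The inequalities $I_u \leq f \leq S_u$ and $I_v \leq g \leq S_v$ yield $|\alpha| \leq (S_u - I_u)/2$ and $|\beta| \leq (S_v - I_v)/2$; this coefficient-matching is the part where I expect the computation to be most delicate and where I would go slowly.

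To close the argument, use $u_j, v_j \geq 0$ and the hypothesis $pq \geq 0$: then $u_j p$ and $v_j q$ have the same sign (or one vanishes), so $|u_j p + v_j q| = u_j |p| + v_j |q|$. Therefore the left-hand side above equals $\tfrac{1}{2}\bigl[(S_u - I_u)|p| + (S_v - I_v)|q|\bigr]$, while the right-hand side is dominated by $|\alpha||p| + |\beta||q| \leq \tfrac{1}{2}\bigl[(S_u - I_u)|p| + (S_v - I_v)|q|\bigr]$ by the bounds on $\alpha,\beta$. This contradicts the strict inequality, so the assumption $pq \geq 0$ is impossible and $(y_1-y_2)(y_3-y_4) < 0$, as claimed.
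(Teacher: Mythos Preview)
Your argument is correct: the algebra does collapse exactly as you predict (the $y_1,\dots,y_4$ coefficients on the right become $\alpha,-\alpha,\beta,-\beta$), the bounds $|\alpha|\leq (S_u-I_u)/2$ and $|\beta|\leq (S_v-I_v)/2$ follow from $I_u\leq f\leq S_u$ and $I_v\leq g\leq S_v$, and under $pq\geq 0$ the absolute value splits to give the contradiction.

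That said, the paper reaches the same conclusion by a shorter and more transparent route. Instead of merging \eqref{eq_1} and \eqref{eq_2} through the $\min$ and the identity $\min(a,b)=\tfrac12(a+b)-\tfrac12|a-b|$, it simply sums \eqref{eq_1} over $j$ and \eqref{eq_2} over $j$ separately, then subtracts \eqref{eq_3} from each. This yields directly
\[
(y_1-y_2)(S_u-f)+(y_3-y_4)(S_v-g)>0,\qquad (y_1-y_2)(f-I_u)+(y_3-y_4)(g-I_v)<0,
\]
two linear inequalities in $p,q$ with nonnegative coefficients and opposite strict signs, which visibly cannot both hold if $p$ and $q$ have the same sign. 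Your single inequality $\tfrac12\sum_j d_j|u_jp+v_jq|<\alpha p+\beta q$ is in fact equivalent to the conjunction of these two (once $pq\geq0$ is imposed), so the two proofs are really the same estimate packaged differently. The paper's version avoids the absolute-value manipulation and the auxiliary constants $\alpha,\beta$; your version has the minor aesthetic advantage of producing one symmetric inequality, and it foreshadows the $\min$-based splitting that the paper uses immediately afterwards when analysing the cases $p>0,\,q<0$ and $p<0,\,q>0$.
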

\begin{proof}
Summing up equations~\eqref{eq_1} and~\eqref{eq_2}, we get the following system:
\begin{eqnarray*}
y_1 \sum_{j=1}^n (a^u_j-a^l_j) u_j + y_3 \sum_{j=1}^n (a^u_j-a^l_j) v_j  + \sum_{j=1}^n y_{j+4} \geq 0,  \\
y_2 \sum_{j=1}^n (a^u_j-a^l_j) u_j + y_4 \sum_{j=1}^n (a^u_j-a^l_j) v_j  + \sum_{j=1}^n y_{j+4} \geq 0,  \\
\left(f- \sum_{j=1}^n a^l_j u_j\right)y_1 + \left(\sum_{j=1}^n a^u_j u_j - f\right)y_2 + \left(g-\sum_{j=1}^n a^l_j v_j\right) y_3 \\ 
 + \left(\sum_{j=1}^n a^u_j v_j - g\right)y_4 + \sum_{j=1}^n y_{j+4} < 0,
\end{eqnarray*}
\noindent which implies 
\begin{eqnarray*}
(y_1 - y_2) \left(\sum_{j=1}^n a^u_j u_j - f\right) + (y_3 - y_4) \left(\sum_{j=1}^n a^u_j v_j - g\right) > 0, \\
(y_1 - y_2) \left(f - \sum_{j=1}^n  a^l_j u_j\right) + (y_3 - y_4) \left( g - \sum_{j=1}^n a^l_j v_j\right) < 0.  \\
\end{eqnarray*}

\noindent The coefficients at $(y_1 - y_2)$ and $(y_3 - y_4)$ in both equations are positive. Therefore, whether $y_1 \leq y_2 \land y_3 \leq y_4$ or $y_1 \geq y_2 \land y_3 \geq y_4$, one of the above equations is violated.
\end{proof}

Due to Lemma~\ref{lemma}, we only need to consider two combinations: $y_1 > y_2 \land y_3<y_4$ and $y_1 < y_2 \land y_3>y_4$. 

Let $y_1 > y_2 \land y_3<y_4$. Equations~\eqref{eq_1}--\eqref{eq_2} are equivalent to the following system:
\begin{equation*}
y_{j+4} \geq -(a^u_j-a^l_j) \min\{u_j y_1 + v_j y_3, u_j y_2 + v_j y_4\}, j=1,\dots,n.
\end{equation*}

Let $J = \{j \colon u_j y_1 + v_j y_3 \leq u_j y_2 + v_j y_4\} = \{j \colon u_j  \leq v_j  \frac{y_4 - y_3}{y_1-y_2}\}$ and $J_c$ be the complement of $J$. Inequality~\eqref{eq_3} requires that we choose $y_{j+4}$ as small as possible, which is
\begin{equation*}
y_{j+4} = -(a^u_j-a^l_j)\left\{
\begin{aligned}
&u_j y_1 + v_j y_3, \quad j \in J,\\
&u_j y_2 + v_j y_4, \quad j \in J_c.
\end{aligned} 
\right.
\end{equation*}
Substituting this into~\eqref{eq_3}, we get
\begin{equation}\label{eq_4}
\begin{aligned}
(y_1 - y_2) \left[ \sum_{j \in J} a^u_j v_j \left( \frac{y_4-y_3}{y_1-y_2}-\frac{u_j}{v_j} \right)
+ \sum_{j \in J_c} a^l_j v_j \left( \frac{y_4-y_3}{y_1-y_2}-\frac{u_j}{v_j}  \right) \right.\\
\left. + f - g\frac{y_4-y_3}{y_1-y_2} \right] < 0.
\end{aligned}
\end{equation}
Define $z \defeq \frac{y_4-y_3}{y_1-y_2} > 0$ and
\begin{equation}\label{phi(z)}
\begin{aligned}
\varphi(z) \defeq \sum_{j \in J} a^u_j v_j \left( z-\frac{u_j}{v_j} \right)
+ \sum_{j \in J_c} a^l_j v_j \left( z-\frac{u_j}{v_j}  \right) + f - gz   \\
= \sum_{j=1}^n \left(z-\frac{u_j}{v_j}\right) v_j 
\left\{
\begin{aligned}
a^u_j, \quad  \frac{u_j}{v_j} \leq z,\\
a^l_j, \quad \frac{u_j}{v_j} \geq z
\end{aligned}
\right\}
+f -gz.
\end{aligned}
\end{equation}

\begin{lemma}
The function $\varphi(z)$ as defined in~\eqref{phi(z)} has the following properties:
\begin{enumerate}
\item $\varphi(z)$ is piecewise linear; \label{num_1}
\item $\varphi'(z) = \sum_{j=1}^n v_j
\left\{
\begin{aligned}
a^u_j, \quad  \frac{u_j}{v_j} < z,\\
a^l_j, \quad \frac{u_j}{v_j} > z
\end{aligned}
\right\} - g
$, $z \neq \frac{u_k}{v_k}$, $k = 1,\dots,n$; \label{num_2}
\item $\varphi'(z \leq \min_j \frac{u_j}{v_j}) = \sum_{j=1}^n a^l_j v_j - g \leq 0$; \label{num_3}
\item $\varphi'(z \geq \max_j \frac{u_j}{v_j}) = \sum_{j=1}^n a^u_j v_j - g \geq 0$; \label{num_4}
\item $\varphi'(z)$ is monotonically non-decreasing; \label{num_5}
\item $\varphi(z)$ is continuous; \label{num_6}
\item $\varphi(z)$ is convex on $(0,\infty)$.
\end{enumerate}
\end{lemma}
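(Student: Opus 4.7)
The plan is to exploit the piecewise structure of $\varphi$ induced by the breakpoints $z = u_j/v_j$, $j = 1,\dots,n$. First I would observe that the selector bracket in \eqref{phi(z)} is unambiguous at any such breakpoint, because the factor $(z - u_j/v_j)$ vanishes precisely when both alternatives $a^u_j$ and $a^l_j$ could be chosen; so $\varphi$ is well-defined regardless of tie-breaking. Sorting the distinct breakpoints gives finitely many open intervals on which the set $\{j : u_j/v_j < z\}$ (equivalently $\{j : u_j/v_j > z\}$) is constant. On each such interval $\varphi$ is an affine function of $z$, which establishes property~\ref{num_1} and, via direct differentiation, property~\ref{num_2}. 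Continuity (property~\ref{num_6}) then follows by checking at each breakpoint $z = u_k/v_k$ that the limits from left and right agree, which is immediate because the only terms that switch their coefficient are multiplied by $(z - u_k/v_k)$, which vanishes at the junction.

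Next I would handle properties~\ref{num_3} and~\ref{num_4}. For $z \leq \min_j u_j/v_j$ every index satisfies $u_j/v_j \geq z$, so the selector picks $a^l_j$ for all $j$; the derivative formula from~\ref{num_2} then reads $\varphi'(z) = \sum_j a^l_j v_j - g$, which is $\leq 0$ by the lower-bound part of~\eqref{bounds_vg} applied row-by-row, namely $\sum_j a^l_j v_j \leq g$. The symmetric argument with $a^u_j$ and the upper bound $\sum_j a^u_j v_j \geq g$ from~\eqref{bounds_vg} gives~\ref{num_4}.

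Monotonicity of $\varphi'$ (property~\ref{num_5}) is the real content. I would argue that as $z$ crosses any breakpoint $u_k/v_k$ from below, the coefficient of $v_k$ in the derivative formula~\ref{num_2} switches from $a^l_k$ to $a^u_k$, producing a jump of size $(a^u_k - a^l_k) v_k \geq 0$ by $A^l \leq A^u$ and $v \geq 0$, while the other summands are unaffected. Since $\varphi'$ is constant on each open piece and makes only non-negative jumps at the finitely many breakpoints, it is monotonically non-decreasing on $(0,\infty)$.

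Finally, convexity on $(0,\infty)$ follows from properties \ref{num_1}, \ref{num_5}, \ref{num_6}: a continuous, piecewise-linear function whose slopes are non-decreasing is convex. I would make this precise by noting that on each affine piece the second difference vanishes, and at each breakpoint the left slope does not exceed the right slope, so the standard characterization of convexity via monotone slopes applies. I do not foresee a substantive obstacle; the only subtlety is to keep the tie-breaking at $z = u_k/v_k$ consistent and to observe that the corresponding terms vanish there, which is exactly what makes $\varphi$ continuous and well-defined across breakpoints.
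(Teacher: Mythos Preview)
Your proposal is correct and follows essentially the same approach as the paper's own proof: the paper dismisses items~\ref{num_1}--\ref{num_4} as ``obvious'', proves~\ref{num_5} by the same switch-from-$a^l_j$-to-$a^u_j$ jump argument, proves~\ref{num_6} by the same observation that the $j$-th summand vanishes at $z=u_j/v_j$, and derives~\ref{num_7} from the preceding items. Your write-up simply spells out in more detail what the paper leaves implicit (in particular the appeal to~\eqref{bounds_vg} for the sign in~\ref{num_3} and~\ref{num_4}).
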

\begin{proof}
\begin{enumerate}
\item[1.--4.] Obvious.
\item[5.] Every time $z$ crosses a point $\frac{u_j}{v_j}$ from left to right, one $a^l_j$ is replaced by a greater value $a^u_j$, and between these points $\varphi'(z)$ is constant, hence the monotonicity of $\varphi'(z)$.
\item[6.] Suspected jumps at $z = \frac{u_j}{v_j}$ are zero, since the summand in~\eqref{phi(z)} at $j \colon z = \frac{u_j}{v_j}$ is zero.
\item[7.] Follows from the above.
\end{enumerate}
\end{proof}

The subdifferential of $\varphi(z)$ is given by:
\begin{equation*}
\subdiff \varphi(z) = 
\left\{
\begin{aligned}
&\sum_{j=1}^n v_j
\left\{
\begin{aligned}
a^u_j, \quad  \frac{u_j}{v_j} < z,\\
a^l_j, \quad \frac{u_j}{v_j} > z
\end{aligned}
\right\} - g, \,\, z \neq \frac{u_k}{v_k}, \,\, k = 1,\dots,n, \\
&\left[\varphi'\left(\frac{u_k}{v_k}-0\right), \varphi'\left(\frac{u_k}{v_k}+0\right)\right], \quad z = \frac{u_k}{v_k}, \,\, k = 1,\dots,n.
\end{aligned}
\right.
\end{equation*}
The minimum of $\varphi(z)$ is obtained at a point $z=\frac{u_{k^*}}{v_{k^*}}$ such that $0 \in \subdiff \varphi \left(\frac{u_{k^*}}{v_{k^*}}\right)$. Let us permute the indices so that $\frac{u_k}{v_k}$ are sorted in ascending order (the entries in $a^u_k$ and $a^l_k$ must be re-sorted accordingly). This operation does not affect the products of $a^l$, $a^u$ with $u$ and $v$. Then $k^*$ is given by the following condition:
\begin{equation}\label{condition_k*_phi}
\sum_{j=1}^{k^*-1} a^u_j v_j + \sum_{j=k^*}^{n} a^l_j v_j -g  \leq 0 
\leq \sum_{j=1}^{k^*} a^u_j v_j + \sum_{j=k^*+1}^{n} a^l_j v_j - g
\end{equation}
\noindent and the minimum of $\varphi(z)$ is 
\begin{equation}\label{phi_min}
\varphi_{min} = \varphi\left(\frac{u_{k^*}}{v_{k^*}}\right) = 
\sum_{j=1}^n \left(\frac{u_{k^*}}{v_{k^*}}-\frac{u_j}{v_j}\right) v_j 
\left\{
\begin{aligned}
a^u_j, \quad  \frac{u_j}{v_j} \leq \frac{u_{k^*}}{v_{k^*}},\\
a^l_j, \quad \frac{u_j}{v_j} \geq \frac{u_{k^*}}{v_{k^*}}
\end{aligned}
\right\}
+f -g\frac{u_{k^*}}{v_{k^*}}.
\end{equation}
Note that, although the condition for $k^*$~\eqref{condition_k*_phi} does not contain $u$, $k^*$ does depend on $u$ due to the permutation of the indices we made.

Conditions~\eqref{condition_k*_phi}-\eqref{phi_min} define the minimum of the function $\varphi(z)$. If this minimum is negative, then the system~\eqref{eq_1}-\eqref{eq_3} has a solution such that $y_1 > y_2 \land y_3 < y_4$ and the original  system~\eqref{system_alpha_beta} has no non-negative solution. In order for the system~\eqref{system_alpha_beta} to have a non-negative solution, we must have
\begin{equation}\label{phi_min>=0}
\varphi_{min}(u) =  
\sum_{j=1}^n \left(\frac{u_{k^*}}{v_{k^*}}-\frac{u_j}{v_j}\right) v_j 
\left\{
\begin{aligned}
a^u_j, \quad  \frac{u_j}{v_j} \leq \frac{u_{k^*}}{v_{k^*}},\\
a^l_j, \quad \frac{u_j}{v_j} \geq \frac{u_{k^*}}{v_{k^*}}
\end{aligned}
\right\}
+f -g\frac{u_{k^*}}{v_{k^*}} \geq 0.
\end{equation}

Proceeding similarly in the case $y_1 < y_2 \land y_3 > y_4$, we obtain the following conditions:
\begin{equation}\label{psi_min>=0}
\psi_{min}(u) \defeq   \sum_{j=1}^n \left(\frac{u_j}{v_j} - \frac{u_{k^{**}}}{v_{k^{**}}}\right) v_j 
\left\{
\begin{aligned}
a^u_j, \quad  \frac{u_j}{v_j} \geq \frac{u_{k^{**}}}{v_{k^{**}}},\\
a^l_j, \quad \frac{u_j}{v_j} \leq \frac{u_{k^{**}}}{v_{k^{**}}}
\end{aligned}
\right\}
+ g\frac{u_{k^{**}}}{v_{k^{**}}} - f \geq 0,
\end{equation}
\noindent where $k^{**}$ is defined by the following condition (the indices are  assumed again to be permuted in such a way  that $\frac{u_k}{v_k}$ are sorted in ascending order):
\begin{equation}\label{condition_k**_psi}
 \sum_{j=1}^{k^{**}} a^l_j v_j + \sum_{j=k^{**}+1}^{n} a^u_j v_j - g \leq 0 
\leq \sum_{j=1}^{k^{**}-1} a^l_j v_j + \sum_{j=k^{**}}^{n} a^u_j v_j -g.
\end{equation}
Note that $k^*$ and $k^{**}$ are, in general, different.

We have proven the following
\begin{theorem}\label{thm-U**}
The set $U^{**}$ defined in~\eqref{U**_fin} consists of all $u \in U$ (as defined in~\eqref{U*}) for which conditions~\eqref{condition_k*_phi}, \eqref{phi_min>=0} and \eqref{psi_min>=0}, \eqref{condition_k**_psi} are satisfied.
\end{theorem}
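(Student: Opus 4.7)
The plan is to assemble the pieces already laid out in the preceding discussion. The first step is to observe that the constraints $Au=f$ and $Av=g$ couple the entries of $A$ only within each row, so $u\in U^{**}$ if and only if, for every row $i$, the $2\times n$ row-wise system~\eqref{system_alpha} admits a solution $\alpha\in[0,1]^n$. Introducing $\beta=1-\alpha$ recasts this as the requirement that the enlarged system~\eqref{system_alpha_beta} admit a solution $\geq 0$. Applying Farkas' lemma then turns the problem on its head: absence of an admissible $\alpha$ in a given row is equivalent to the existence of a certificate $y\in\mathbb{R}^{n+4}$ satisfying~\eqref{eq_1}--\eqref{eq_3}. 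The task reduces to characterising those $u$ for which no such certificate exists.

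By Lemma~\ref{lemma}, a certificate can only exist in one of the two sign patterns $y_1>y_2\wedge y_3<y_4$ or $y_1<y_2\wedge y_3>y_4$, so it suffices to handle each case and then intersect the resulting conditions. For the first case I would choose each $y_{j+4}$ as small as~\eqref{eq_1}--\eqref{eq_2} allow, namely $y_{j+4}=-(a^u_j-a^l_j)\min\{u_jy_1+v_jy_3,\,u_jy_2+v_jy_4\}$, and substitute into~\eqref{eq_3}; factoring out the positive quantity $y_1-y_2$ and setting $z=(y_4-y_3)/(y_1-y_2)>0$ reduces the question to whether $\varphi(z)<0$ for some $z>0$, with $\varphi$ the one-variable function~\eqref{phi(z)}. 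The preceding lemma on $\varphi$ shows that $\varphi$ is convex and piecewise linear with breakpoints at $u_j/v_j$, and that~\eqref{bounds_vg} forces $\varphi'$ to pass from non-positive to non-negative values; hence the minimum is attained at some breakpoint $z=u_{k^*}/v_{k^*}$ singled out by the subdifferential inclusion~\eqref{condition_k*_phi}, with value given by~\eqref{phi_min}. Non-existence of a certificate in this sign pattern is therefore equivalent to $\varphi_{\min}(u)\geq 0$, i.e.\ to~\eqref{phi_min>=0}. Treating the symmetric case $y_1<y_2\wedge y_3>y_4$ analogously (with the roles of $A^l$ and $A^u$ interchanged) produces the companion function $\psi$, and yields the conditions~\eqref{condition_k**_psi} and~\eqref{psi_min>=0}.

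Requiring both cases to fail, simultaneously in every row of $A$, gives exactly the characterisation claimed in the theorem. The inclusion $U^{**}\subseteq U$ is built into the definitions. The main technical obstacle is the one-dimensional reduction from the $(n+4)$-parameter dual certificate to the single scalar $z$ and the verification that the minimiser of the resulting piecewise-linear function indeed lies at one of the $u_j/v_j$; but this is precisely what the convexity and monotonicity properties of $\varphi$ (and the analogous properties of $\psi$) established in the preceding lemma deliver, so the proof is mostly a matter of carefully collecting these pieces.
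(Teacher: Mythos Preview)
Your proposal is correct and follows precisely the paper's own argument: the row-wise decoupling, the passage to the enlarged system~\eqref{system_alpha_beta} via $\beta=1-\alpha$, the application of Farkas' lemma yielding~\eqref{eq_1}--\eqref{eq_3}, the reduction to two sign patterns via Lemma~\ref{lemma}, the optimal choice of $y_{j+4}$, and the reduction to the one-variable convex piecewise-linear function $\varphi$ (resp.\ $\psi$) whose minimum is located at a breakpoint by the lemma on $\varphi$. The paper presents exactly this chain of reasoning in the discussion preceding the theorem and then simply declares ``We have proven the following''; your write-up is an accurate and complete recapitulation of it.
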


\begin{remark}\label{U**_not_convex}
If there was no dependence of $k^*$ and $k^{**}$ on $u$, the functions $\varphi_{min}(u)$ as in~\eqref{phi_min} and $\psi_{min}(u)$ as in~\eqref{psi_min>=0} would be convex and the set $U^{**}$ would be a difference of the convex set $U$ and two convex sets defined by the inequalities $\varphi_{min}(u)<0$ and $\psi_{min}(u)<0$. The dependence of $k^*$ and $k^{**}$ on $u$ makes the structure of $U^{**}$ more complicated. We do not study this structure further in this work.
\end{remark}

Figure~\ref{pic-U**} shows the set $U^{**}$ in the case when $\mathcal U=\mathbb R^2$, $\mathcal F = \mathbb R$, $A^l$ and $A^u$ are randomly chosen in the intervals $[0,1]^2$ and $[1,2]^2$, respectively, $f$ is generated using the matrix $(A^u+A^l)/2$ and a randomly chosen $u \in [0,25]^2$, and  $f^u = f^l = f$. To visualise the feasible set $U^{**}$, we pick random points $u$ in the positive quadrant and check conditions~\eqref{condition_k*_phi}, \eqref{phi_min>=0} and \eqref{psi_min>=0}, \eqref{condition_k**_psi} (points that satisfy these conditions are shown as blue stars in Fig.~\ref{pic-U**}). In this simple example, the set $U^{**}$ can be computed analytically as well, the result is shown by two solid lines in Fig.~\ref{pic-U**}. As expected, the result is the same.

\begin{figure}[t]
\centering
\includegraphics[height=0.5\textwidth]{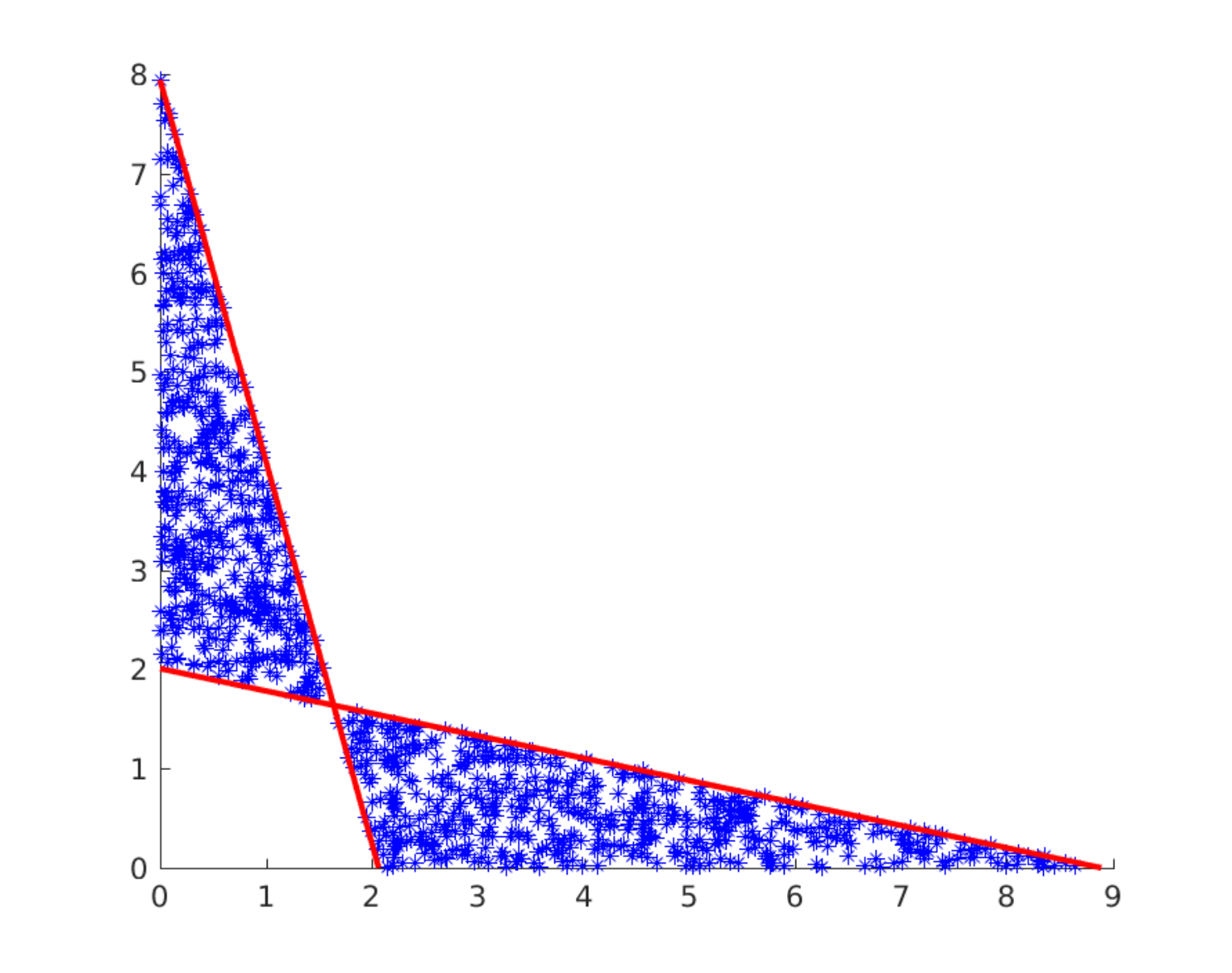}
\caption{The feasible set $U^{**}$ in a 2D toy problem generated using conditions~\eqref{condition_k*_phi}, \eqref{phi_min>=0} and \eqref{psi_min>=0}, \eqref{condition_k**_psi} (blue stars) and using analytic formulas (between red solid lines). As expected, the two sets coincide.} 
\label{pic-U**}
\end{figure}

The result of Theorem~\ref{thm-U**} gives the impression that the information that $A$ is a convolution matrix (which can be expressed as the condition $Ae=e$) is of little use for the approach, since including this information in the reconstruction algorithm requires solving a non-convex optimisation problem. However, the additional linear constraint can sometimes be used to tighten the bounds $A^l$, $A^u$, if they weren't carefully chosen initially. Indeed, one can attempt finding tighter lower and upper bounds by solving the following optimisation problems:
\begin{equation}\label{new_Al_Au}
\tilde a^l_{ij} = \min_{A^l \leq A \leq A^u, \,\, Ae = e} a_{ij}, \quad \tilde a^u_{ij} = \max_{A^l \leq A \leq A^u, \,\, Ae = e} a_{ij}.
\end{equation}
\noindent These optimisation problems are convex and can be efficiently solved in parallel.

In the infinite-dimensional case, the analogue of the optimisation problems~\eqref{new_Al_Au} is as follows:
\begin{equation}\label{new_Al_Au_infinite}
\tilde A^l = \inf \{A \colon A^l \leq A \leq A^u, \,\, Ae = e\}, \quad \tilde A^u = \sup \{A \colon A^l \leq A \leq A^u, \,\, Ae = e\},
\end{equation}
\noindent where the $\inf$ and $\sup$ are taken in the space of regular operators $\mathcal U \to \mathcal F$. For these $\inf$ and $\sup$ to exist, the space of regular operators $\mathcal U \to \mathcal F$ must be order complete, i.e. any majorised set in it must have a supremum. This is guaranteed when $\mathcal F$ is order complete~\cite[Theorem 1.16]{Abramovich}. For example, the spaces of measurable functions $L_p(\mathcal S, \Sigma, \mu)$ are order complete, whilst the space of continuous functions $C(\mathcal S)$ is not~\cite{Schaefer}.

\begin{remark}
An interesting question is, what is the convex hull $\conv U^{**}$ of $U^{**}$. If it is smaller than $U,$ then the feasible set for $u$ can be tightened while preserving its convexity and better reconstructions can be expected. It is clear that in some situations $\conv U^{**}$ is a strict subset of $U$, for example, if $A^l \neq \inf \{A \colon A^l \leq A \leq A^u, \,\, Ae = e\}$ or  $A^u \neq \sup \{A \colon A^l \leq A \leq A^u, \,\, Ae = e\}$ (cf.~\eqref{new_Al_Au_infinite}). However, it is hard to say anything more about $\conv U^{**}$ at the first glance. We leave the study of $\conv U^{**}$ for future work.
\end{remark}

\section{Applications in deblurring}\label{section-deblurring}
Deblurring is widely used to improve the quality of images, for example, in astronomy~\cite{deconvolution_astronomy_review} and fluorescence microscopy~\cite{deconvolution_microscopy_review, Bertero_et_al_deblurring_review:2009}. Quite often, we only have an estimate of the blurring kernel, for example, when it is measured experimentally~\cite{Telescope_PSF, Shaw_PSF_91} or obtained using simplified models~\cite{Zhang:07}. Therefore, it is necessary to account for the uncertainty in the blurring kernel during the reconstruction. One possibility is to estimate the kernel and the image simultaneously, which is known as blind deblurring~\cite{Chan_Wong_blind_deconvolution_98, Perrone_Favaro_blind_deconvolution_16}. The problem with this approach is that it results in non-convex optimisation problems and is severely ill-posed. Another option is to include the knowledge about the uncertainty in the blurring operator, if such knowledge is available, into the reconstruction process, which is the approach that we pursue. 

\paragraph{Deblurring in 1D} Let us first consider a simple one-dimensional example to get a feeling for how noise in the operator affects reconstruction and how the proposed approach can alleviate the impact of this noise. Consider the signal $u$ shown in Fig.~\ref{pic-deblurring_signal} in blue (dashed line). This signal is convolved with a Gaussian blurring kernel 
with standard deviation $0.5$ and Dirichlet boundary conditions. Then uniform noise with support $[-c,c]$ with $c=0.005\max_i |u_i|$ is added to it (i.e. we add $1\%$ noise). The blurred and noisy signal is shown in Fig.~\ref{pic-deblurring_signal} in green (solid line). 
The ground-truth signal is piecewise-constant, suggesting the use of total variation~\cite{ROF} as the regulariser.

First we reconstruct the signal using the exact operator $A$ by solving the optimisation problem~\eqref{optimisation_problem} with $\R(u) = \TV(u)$, $A^l = A^u = A$. All reconstructions were computed using CVX~\cite{cvx,cvx2}. The bounds for the right-hand side $f^u$ and $f^l$ can be obtained from the noisy signal $f$ as follows: $f^l = f-c$, $f^u = f+c$. Note that in this case the problem~\eqref{optimisation_problem} is equivalent to the following one:
\begin{equation}\label{residual_method_L_infty_exact}
\min \TV(u) \quad \text{ s.t. } u \geq 0, \,\, \norm{Au-f}_\infty \leq c.
\end{equation}

Since $\TV$ is not strictly convex, we cannot guarantee uniqueness of the solution of~\eqref{residual_method_L_infty_exact}. Non-uniqueness of some $\TV$-based reconstruction models, e.g., the $\TV-L_1$ model, is a well-known issue~\cite{Chan_Esedoglu_TVL1}. In order to ensure uniqueness we add a small correction term $\gamma \norm{u}_2$ with $\gamma = 10^{-4}$ to the regulariser in~\eqref{residual_method_L_infty_exact}. In order to simplify notation we omit this correction term in the statements of optimisation problems involving $\TV$ that follow.

As expected, using the exact forward operator and a suitable regulariser we get a nearly perfect reconstruction (Fig.~\ref{pic-reconstruction_exact}).

\begin{figure}[t]
    \centering
    \begin{subfigure}[t]{0.45\textwidth}
	\includegraphics[width=\textwidth]{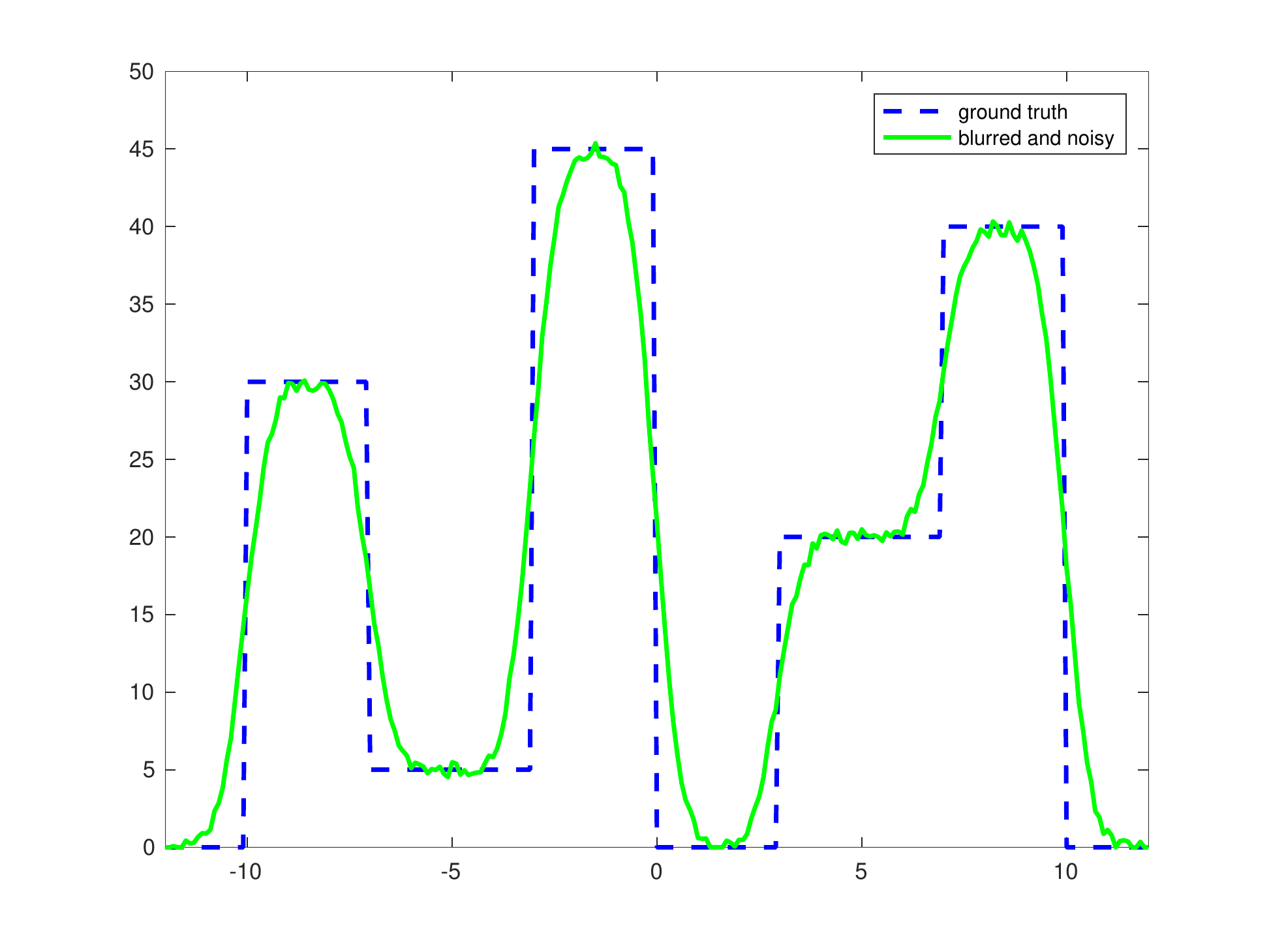}
	\subcaption{Ground truth (blue dashed line) and blurred and noisy signal (green solid line). $\PSNR = 18.3$, $\SSIM = 0.08$.}
	\label{pic-deblurring_signal}    
    \end{subfigure}
\begin{subfigure}[t]{0.45\textwidth}
            \centering
	\includegraphics[width=\textwidth]{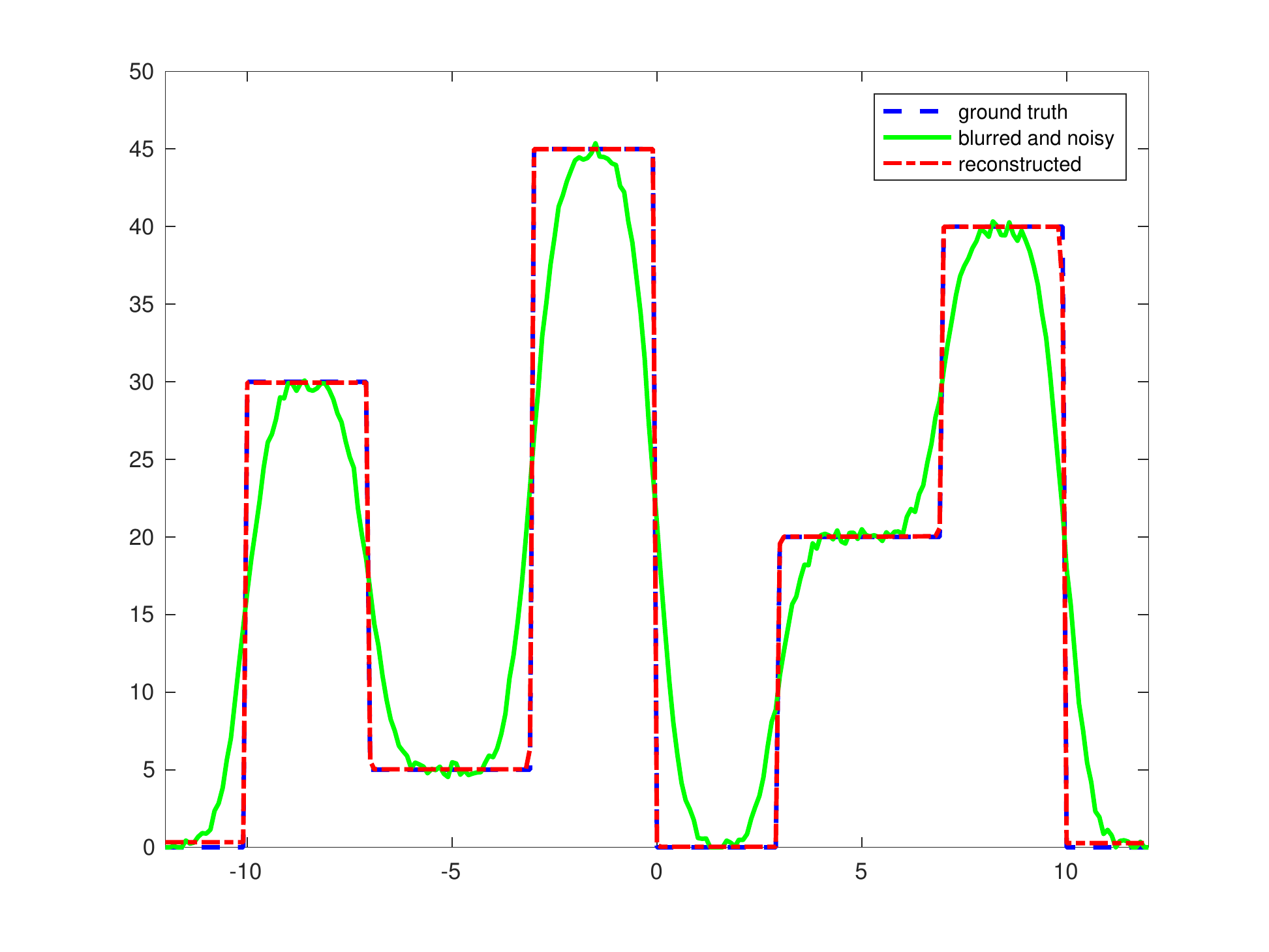}
	\subcaption{Reconstruction using the exact operator (red dash-dotted line). $\PSNR = 43.8$, $\SSIM = 0.77$.}
	\label{pic-reconstruction_exact}    \end{subfigure}
\\
\begin{subfigure}[t]{0.45\textwidth}
            \centering
	\includegraphics[width=\textwidth]{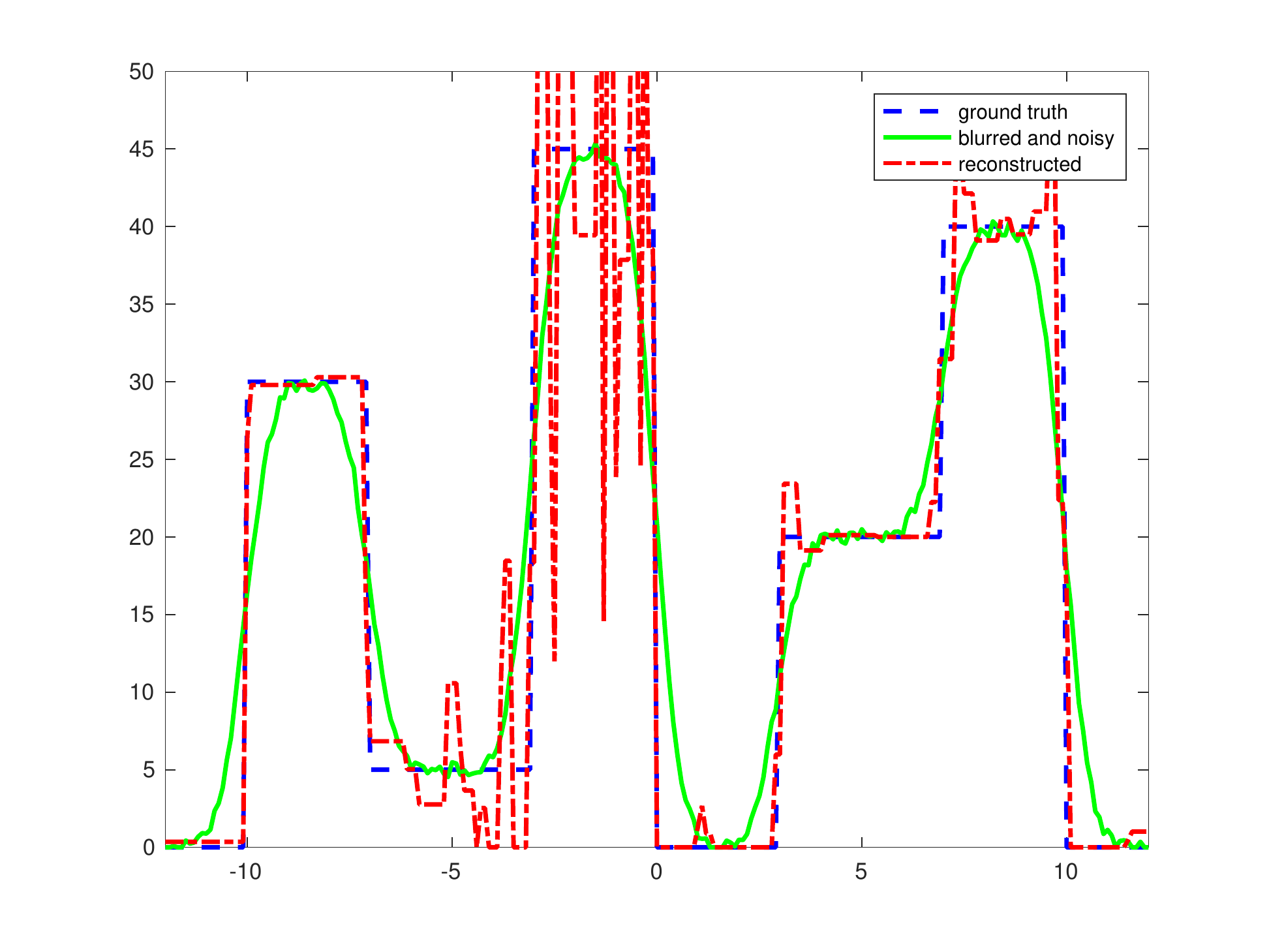}
	\subcaption{Reconstruction using a noisy operator (red dash-dotted line).  $\PSNR = 13.8$, $\SSIM = 0.22$. }
	\label{pic-reconstruction_wrong}
    \end{subfigure}
\begin{subfigure}[t]{0.45\textwidth}
            \centering
	\includegraphics[width=\textwidth]{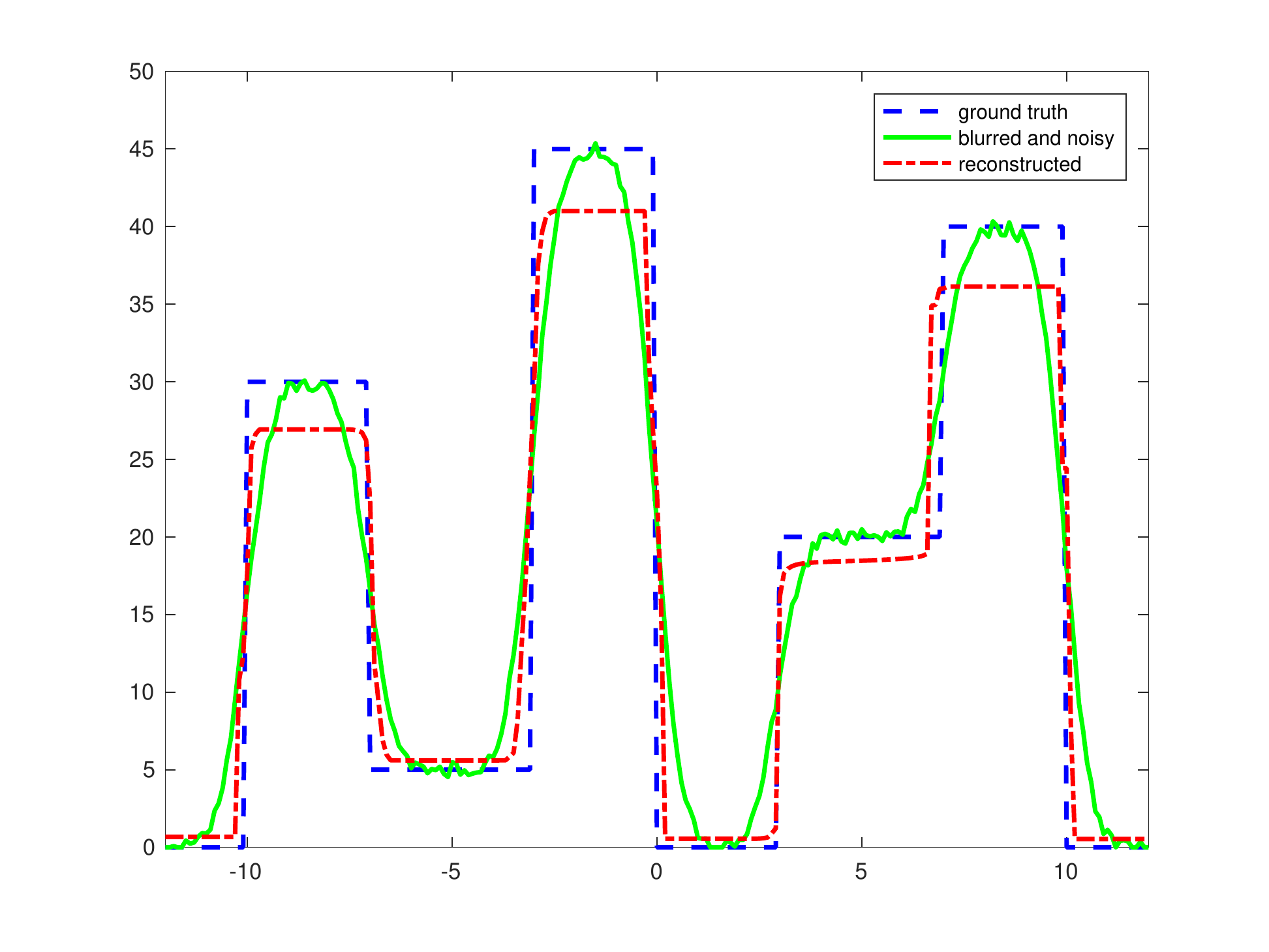}
	\subcaption{Reconstruction using interval bounds for the operator (red dash-dotted line). $\PSNR = 22.3$, $\SSIM = 0.61$.}
	\label{pic-reconstruction_bounds}
    \end{subfigure}
    \caption{Reconstruction of a piecewise constant signal using total variation. Perfect knowledge of the blurring operator yields nearly perfect reconstruction (\subref{pic-reconstruction_exact}). However, even $10\%$ noise in the blurring operator renders the inversion ill-posed (\subref{pic-reconstruction_wrong}). Taking uncertainty in the blurring operator into account yields stable reconstruction, but results in some loss of contrast (\subref{pic-reconstruction_bounds}).}
\end{figure}

Let us assume that only a slightly perturbed version $\tilde A$ of the blurring operator $A$ is available: 
\begin{equation}\label{noise_matrix}
\tilde a_{ij} = \max\{a_{ij} + r_{ij} \cdot d,0\},
\end{equation}
\noindent where $d=0.05*\max_{k,l} a_{kl}$ and $r_{ij}$ are i.i.d.~uniform random numbers with support $[-1,1]$ (i.e. we add $10\%$ noise to the operator).  Note that while the true blurring matrix $A$ is typically sparse, this is not any longer true for the perturbed matrix $\tilde A$. It reasonable to assume, however, that all entries smaller than $d$ are pure noise and set them to zero. We also take into account that the rows of the  blurring matrix must sum up to one and normalise the rows of $\tilde A$.

Let us solve the optimisation problem~\eqref{optimisation_problem} with $\R(u) = \TV(u)$, $A^l = A^u = \tilde A$ and $f^u, f^l$ as defined above. This is equivalent to solving
\begin{equation}\label{residual_method_L_infty_wrong}
\min \TV(u) \quad \text{ s.t. } u \geq 0, \,\, \norm{\tilde Au-f}_\infty \leq c.
\end{equation}

The condition that the exact solution $\bar u$ is in the feasible set $U$~\eqref{U}, which is crucial for the convergence proof (Theorem~\ref{thm-convergence}), does not hold in this case any more and therefore the regularising properties of the approach~\eqref{optimisation_problem} can not be guaranteed. Indeed, we observe  that an error in the operator of $10\%$ renders the inversion ill-posed and the reconstruction highly oscillatory (Fig.~\ref{pic-reconstruction_wrong}). This problem can be dealt with by increasing the allowed noise level in the problem~\eqref{residual_method_L_infty_wrong}, e.g., by multiplying  the right-hand side of~\eqref{residual_method_L_infty_wrong} by a factor $C>1$, however, the value of $C$ that will be sufficient to remove oscillations depends on the noise in the operator and is not straightforward to determine.

Let us now acknowledge the fact that the operator $\tilde A$ contains errors and derive lower and upper bounds for the unknown true operator from~\eqref{noise_matrix} as follows: 
\begin{equation}\label{bounds_matrix}
a^u_{ij} = \tilde a_{ij} + d, \quad a^l_{ij} = \max\{\tilde a_{ij} -d,0\} 
\end{equation}

\noindent with $d=0.05*\max_{k,l}\tilde a_{kl}$. Since we assumed that we know the support of the blurring kernel when we calculated $\tilde A$ (while setting all entries in $\tilde A$ below a certain threshold to zero), we will also use this information in determining $A^u$ and set $a^u_{ij}$ to zero whenever $\tilde a_{ij}=0$. 

For the reconstruction we  solve the following problem:
\begin{equation}\label{residual_bounds}
\min \TV(u) \quad \text{ s.t. } u \geq 0, \,\, A^l u \leq f^u, \,\, A^u u \geq f^l.
\end{equation}

The result is shown in Fig.~\ref{pic-reconstruction_bounds}. We observe that the oscillations disappear but there is some loss of contrast compared to the reconstruction using the exact operator (Fig~.~\ref{pic-reconstruction_exact}).
These results can be explained as follows. If the feasible set in~\eqref{U} is defined using a noisy operator, it does not contain, in general, the exact solution and therefore no stable approximation of the exact solution can be achieved using elements from this feasible set. Explicitly accounting for the uncertainty in the operator makes the feasible set larger and guarantees the inclusion of the exact solution, enabling stable reconstruction. However, this increased size of the feasible set is also responsible for the loss of contrast observed in Figure~\ref{pic-reconstruction_bounds} as compared to the reconstruction using the exact operator (Figure~\ref{pic-reconstruction_bounds}). The reason for this loss of contrast is that, given the freedom to choose solutions from a larger feasible set, $\TV$ seeks to find one with smallest possible contrast.

\paragraph{Deblurring in 2D}
Let us now turn to two-dimensional images. All images used in this section are greyscale images (with values between $0$ and $255$) of size $128\times128$ pixels. Consider first a piecewise constant image shown in Figure~\ref{pic-squares_orig}. This image is convolved with a Gaussian blur kernel with standard deviation $1$. Neumann boundary conditions are used. Then uniform noise with support $[-c;c]$ with $c=10$ is added to it, which corresponds to $8 \%$ noise. The blurred and noisy image is shown in Figure~\ref{pic-squares_noisy}.

\begin{figure}[t!]
    \centering
    \begin{subfigure}[t]{0.32\textwidth}
            \centering
            \includegraphics[width=\textwidth]{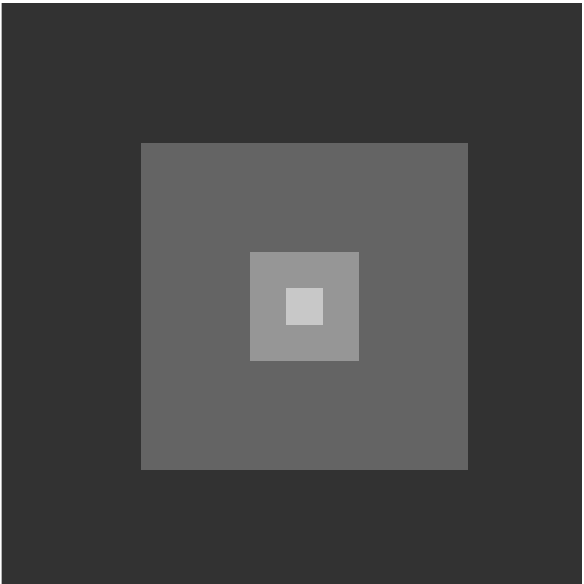}
            \subcaption{Original image}
    	\label{pic-squares_orig}
    \end{subfigure}
\begin{subfigure}[t]{0.32\textwidth}
            \centering
            \includegraphics[width=\textwidth]{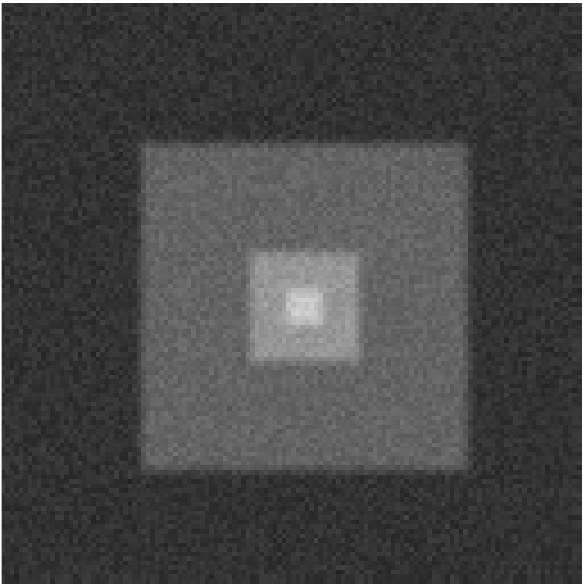}
            \subcaption{Blurred and noisy image. $\PSNR = 31.6$, $\SSIM = 0.14$}
    	\label{pic-squares_noisy}
    \end{subfigure}
\begin{subfigure}[t]{0.32\textwidth}
            \centering
            \includegraphics[width=\textwidth]{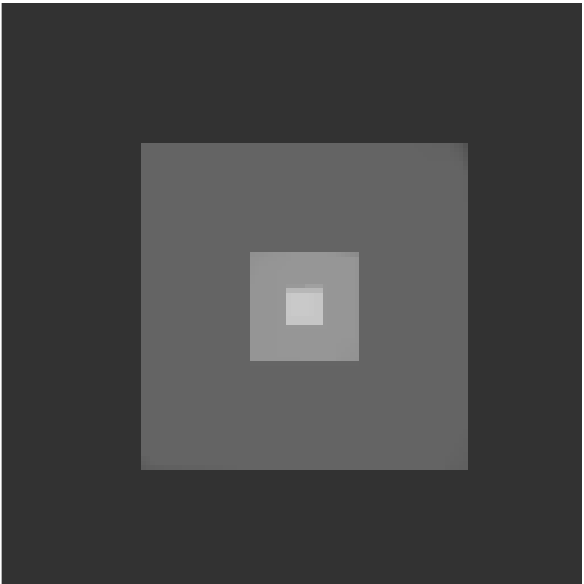}
            \subcaption{Reconstruction using the exact operator and isotropic $\TV$. $\PSNR = 52.9$, $\SSIM = 0.95$}
    	\label{pic-squares_exact}
    \end{subfigure}
    \\
\begin{subfigure}[t]{0.32\textwidth}
            \centering
            \includegraphics[width=\textwidth]{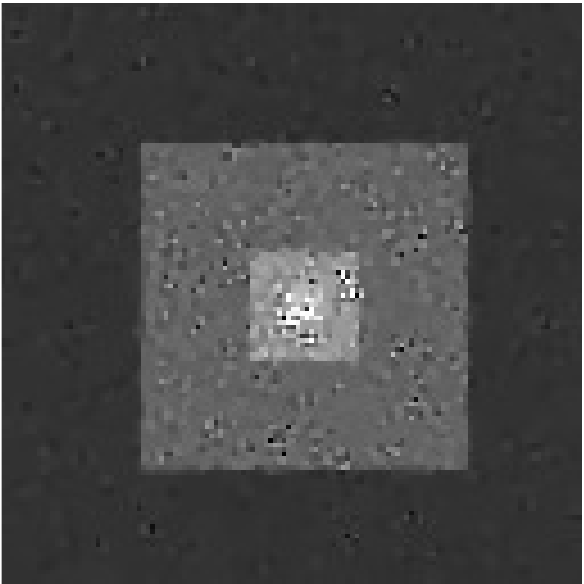}
            \subcaption{Reconstruction using a noisy operator and isotropic $\TV$. $\PSNR = 29.3$, $\SSIM = 0.16$}
    	\label{pic-squares_wrong}
    \end{subfigure}
\begin{subfigure}[t]{0.32\textwidth}
            \centering
            \includegraphics[width=\textwidth]{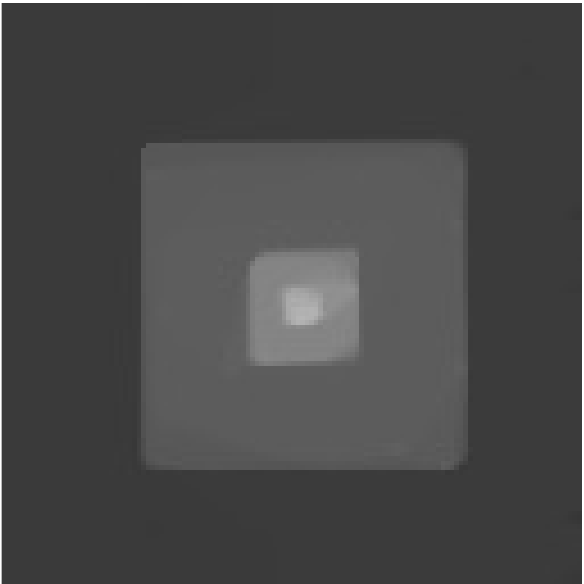}
            \subcaption{Reconstruction using interval bounds for the operator and isotropic $\TV$. $\PSNR = 28.6$, $\SSIM = 0.66$}
    	\label{pic-squares_bounds_isotropic}
    \end{subfigure}
\begin{subfigure}[t]{0.32\textwidth}
            \centering
            \includegraphics[width=\textwidth]{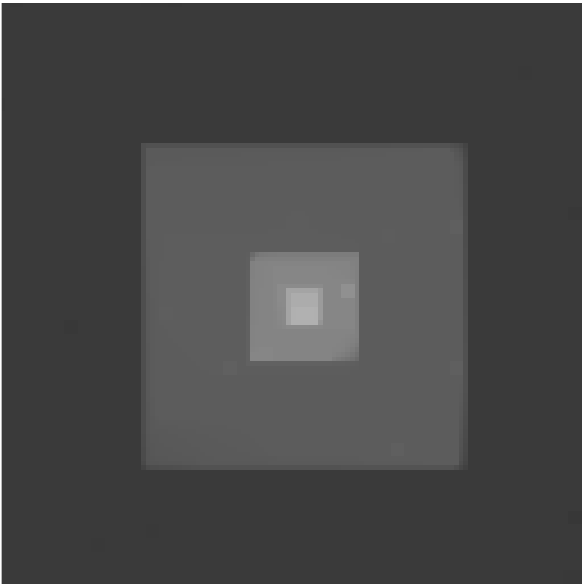}
            \subcaption{Reconstruction using interval bounds for the operator and anisotropic $\TV$. $\PSNR = 28.8$, $\SSIM = 0.72$}
    	\label{pic-squares_bounds_anisotropic}
    \end{subfigure}
    \caption{Reconstruction of a piecewise-constant image using total variation. Using a noisy operator produces clearly visible artifacts (\subref{pic-squares_wrong}). Reconstruction using interval bounds for the operator removes the artifacts at the price of a slight loss of contrast (\subref{pic-squares_bounds_isotropic}, \subref{pic-squares_bounds_anisotropic}). Isotropic $\TV$ favoures squares with rounded corners (\subref{pic-squares_bounds_isotropic}). Anisotropic $\TV$ leaves the corners sharp (\subref{pic-squares_bounds_anisotropic}).}
\end{figure}

Reconstruction using the exact operator yields nearly perfect results (Figure~\ref{pic-squares_exact}).

Let us assume, as in the previous section, that only a noisy version $\tilde A$ of the forward operator is available, which we obtain by adding $5\%$ uniform noise to the exact forward operator $A$:
\begin{equation}\label{bounds_matrix}
a^u_{ij} = \tilde a_{ij} + d, \quad a^l_{ij} = \max\{\tilde a_{ij} -d,0\} 
\end{equation}

\noindent with $d=0.025*\max_{k,l}\tilde a_{kl}$. Then we set all entries in $\tilde A$ less than $d$ to zero and normalise the rows of $\tilde A$.

Let us reconstruct the image using the noisy operator and isotropic total variation. The result is shown in Figure~\ref{pic-squares_wrong}. We observe numerous artifacts, however, contrast is mainly preserved and corners of the squares are sharp.

Let us now reconstruct the image using interval bounds for the forward operator, which we obtain the same way as in the one-dimensional example. The result is shown in Figure~\ref{pic-squares_bounds_isotropic}. The artifacts are removed, but the contrast is slightly reduced, which can be explained the same way as in the one-dimensional example. We also observe that the corners of the squares became rounded, which is the behaviour that we indeed would expect from isotropic $\TV$, which discourages sharp corners.

This illustrates an important feature of the interval-based approach. The feasible set~\eqref{U} obtained using interval bounds for the operator is larger and  gives more freedom to the regulariser than a feasible set obtained using a fixed operator (whether exact or noisy). Therefore, features specific to the regulariser are more apparent in the reconstructions in the former case, which explains the rounding of the corners in Figure~\ref{pic-squares_bounds_isotropic}. To support this idea, let us reconstruct the same image using anisotropic total variation, which does not penalise sharp corners~\cite{Esedoglu_Osher_isotropic_TV:2004}. The result is shown in Figure~\ref{pic-squares_bounds_anisotropic}. Sharp corners are recovered and even the contrast looks a bit better. 

Let us now consider another example shown in Figure~\ref{pic-horiz_orig}. This picture is more challenging because it contains much smaller objects, some of which have little contrast from the background, but it is still pieceiwse-constant, making total variation a suitable regulariser. 

\begin{figure}[t!]
    \centering
    \begin{subfigure}[t]{0.32\textwidth}
            \centering
            \includegraphics[width=\textwidth]{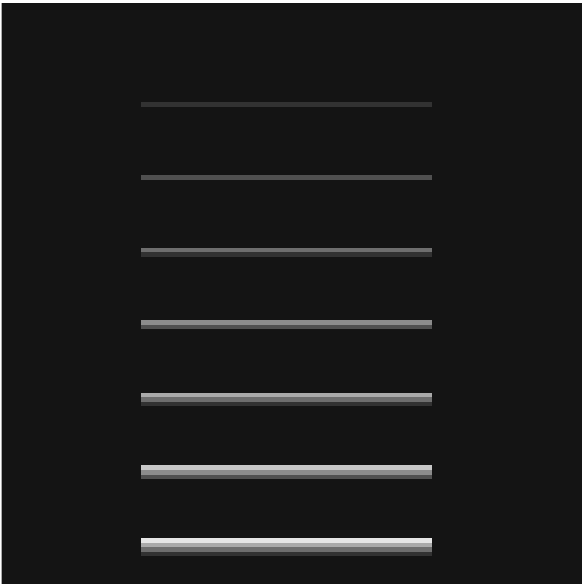}
            \subcaption{Original image}
    	\label{pic-horiz_orig}
    \end{subfigure}
\begin{subfigure}[t]{0.32\textwidth}
            \centering
            \includegraphics[width=\textwidth]{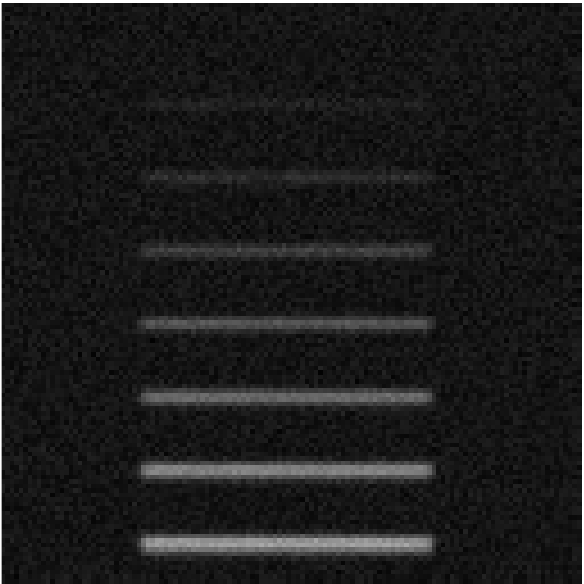}
            \subcaption{Blurred and noisy image. $\PSNR = 25.6$, $\SSIM = 0.20$}
    	\label{pic-horiz_noisy}
    \end{subfigure}
\begin{subfigure}[t]{0.32\textwidth}
            \centering
            \includegraphics[width=\textwidth]{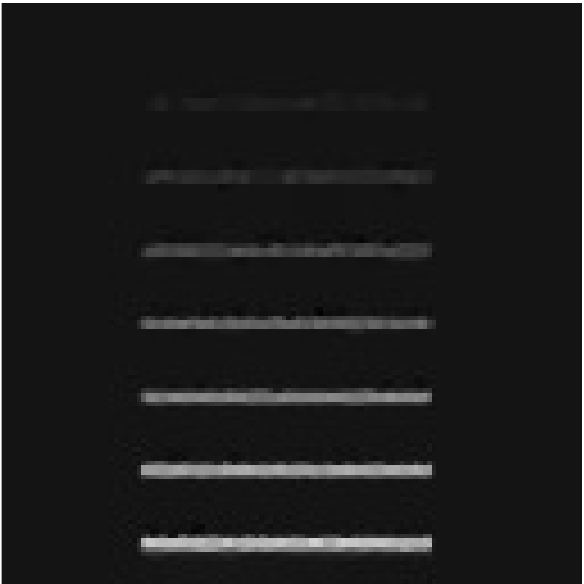}
            \subcaption{Reconstruction using the exact operator and isotropic $\TV$. $\PSNR = 29.1$, $\SSIM = 0.79$}
    	\label{pic-horiz_exact}
    \end{subfigure}
    \\
\begin{subfigure}[t]{0.32\textwidth}
            \centering
            \includegraphics[width=\textwidth]{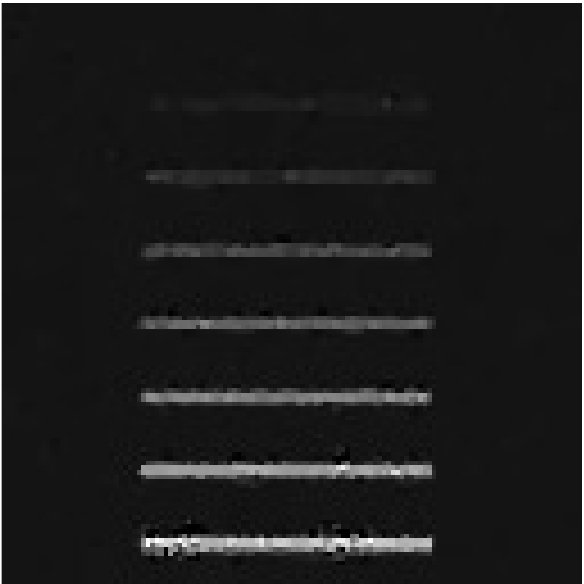}
            \subcaption{Reconstruction using a noisy operator and isotropic $\TV$. $\PSNR = 28.4$, $\SSIM = 0.33$}
    	\label{pic-horiz_wrong}
    \end{subfigure}
\begin{subfigure}[t]{0.32\textwidth}
            \centering
            \includegraphics[width=\textwidth]{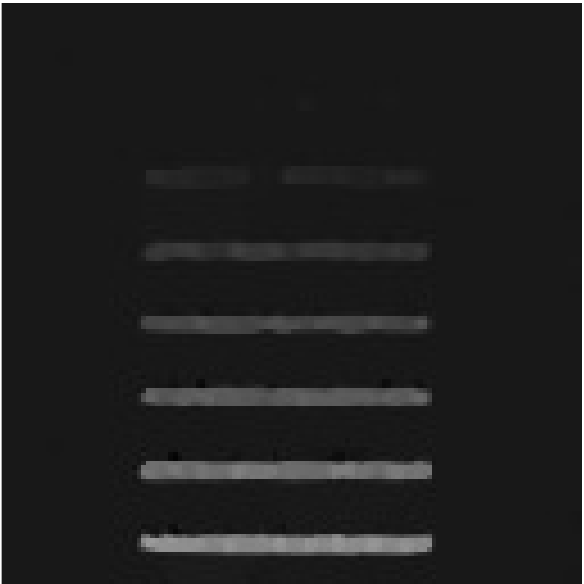}
            \subcaption{Reconstruction using interval bounds for the operator and isotropic $\TV$. $\PSNR = 24.2$, $\SSIM = 0.62$}
    	\label{pic-horiz_bounds_isotropic}
    \end{subfigure}
\begin{subfigure}[t]{0.32\textwidth}
            \centering
            \includegraphics[width=\textwidth]{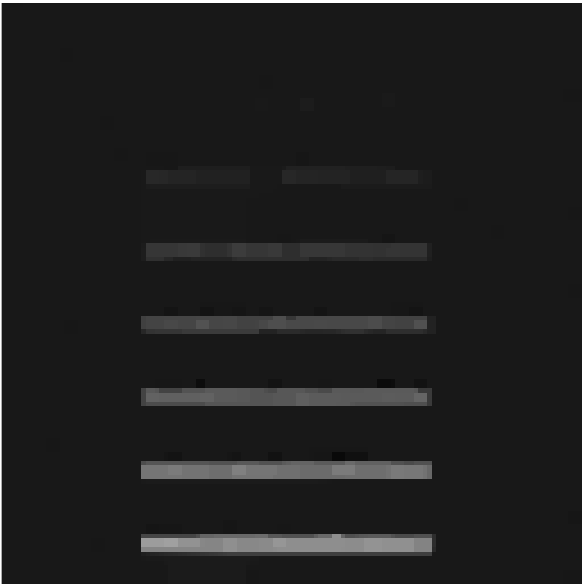}
            \subcaption{Reconstruction using interval bounds for the operator and anisotropic $\TV$. $\PSNR = 24.5$, $\SSIM = 0.65$}
    	\label{pic-horiz_bounds_anisotropic}
    \end{subfigure}
    \caption{Reconstruction of a piecewise-constant image  with thin structures using total variation. Even reconstruction using the exact operator produces some artifacts (\subref{pic-horiz_exact}). These artifacts are much more clearly present in the reconstruction using a noisy operator (\subref{pic-horiz_wrong}). Reconstruction using interval bounds for the operator removes the artifacts at the price of some loss of contrast (\subref{pic-horiz_bounds_isotropic}, \subref{pic-horiz_bounds_anisotropic}). The top thin line disappears. Anisotropic $\TV$ (\subref{pic-squares_bounds_anisotropic}) yields slightly better reconstruction than isotropic $\TV$ (\subref{pic-squares_bounds_isotropic}).}
\end{figure}

Already in the reconstruction using the exact operator (Figure~\ref{pic-horiz_exact}) there are some artifacts and the top thin line almost disappeared. The reconstruction quality is even worse when we use the noisy operator (Figure~\ref{pic-horiz_wrong}). Artifacts are clearly visible, especially in the bottom line. 

Reconstruction using interval bounds for the operator removes the artifacts (Figure~\ref{pic-horiz_bounds_isotropic}), but causes some lost of contrast, as we already observed in previous examples. The top line, which already had little contrast from the background in the original image, completely disappears in the reconstruction. Using anisotropic total variation further removes the artifacts, but does not recover the top line (Figure~\ref{pic-horiz_bounds_anisotropic}). 

This loss of contrast and especially the disappearance of some details are, of course, unwanted features of a reconstruction algorithm. However, it is important to note that the solutions shown in Figures~\ref{pic-horiz_bounds_isotropic} and~\ref{pic-horiz_bounds_anisotropic} indeed could produce the observed data in Figure~\ref{pic-horiz_noisy} (as opposed to the reconstruction in Figure~\ref{pic-horiz_wrong}). Therefore, we have no reasons to reject the images in Figures~\ref{pic-horiz_bounds_isotropic} and ~\ref{pic-horiz_bounds_anisotropic} as plausible reconstructions. By Theorem~\ref{thm-U_in_U*} we have that there is a blurring operator $A$ between the interval bounds $A^l$ and $A^u$ and a blurry image $f$ between $f^l$ and $f^u$ such that $Au=f$, where $u$ is any element of the feasible set~\eqref{U}, e.g., the images shown in Figure~\ref{pic-horiz_bounds_isotropic} or~\ref{pic-horiz_bounds_anisotropic}. The only way to reject these images would be by narrowing the feasible set by adding more information about the unknown solution or the unknown forward operator. We attempted the latter in Section~\ref{section-U**}, but came to the conclusion that adding a linear constraint on the forward operator breaks the convexity of the feasible set.

\paragraph{Deblurring of natural images} Let us finally assess the performance of the proposed method on real images. Consider a well-known example, the cameraman (Figure~\ref{pic-cameraman_orig}). Let us add blur and noise to it the same way as in the previous examples (Figure~\ref{pic-cameraman_noisy}).

\begin{figure}[t!]
    \centering
    \begin{subfigure}[t]{0.32\textwidth}
            \centering
            \includegraphics[width=\textwidth]{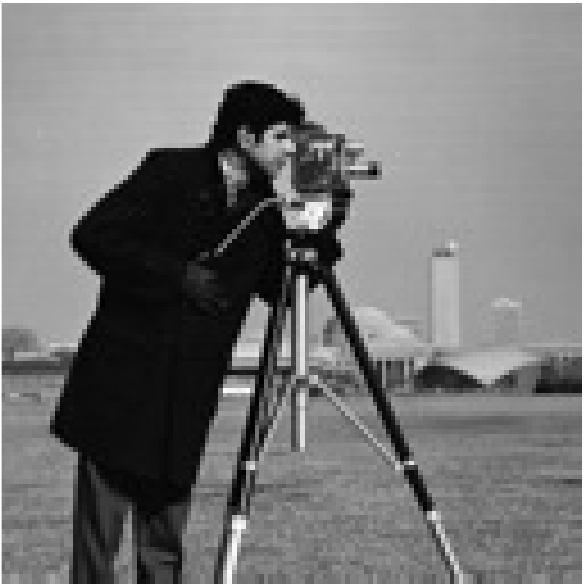}
            \subcaption{Original image}
    	\label{pic-cameraman_orig}
    \end{subfigure}
\begin{subfigure}[t]{0.32\textwidth}
            \centering
            \includegraphics[width=\textwidth]{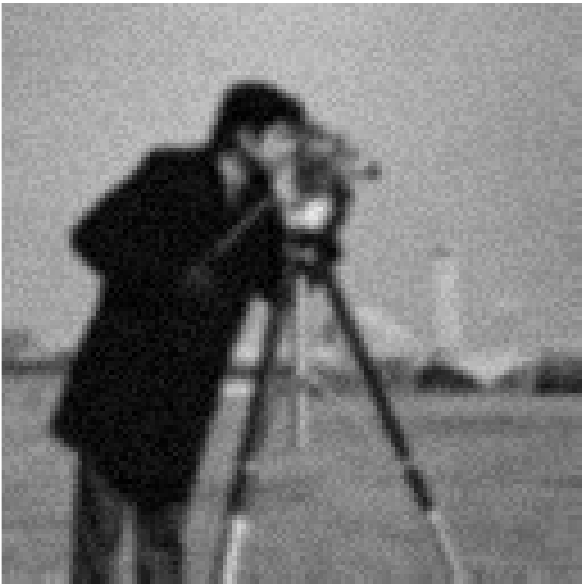}
            \subcaption{Blurred and noisy image. $\PSNR = 25.0$, $\SSIM = 0.41$}
    	\label{pic-cameraman_noisy}
    \end{subfigure}
\begin{subfigure}[t]{0.32\textwidth}
            \centering
            \includegraphics[width=\textwidth]{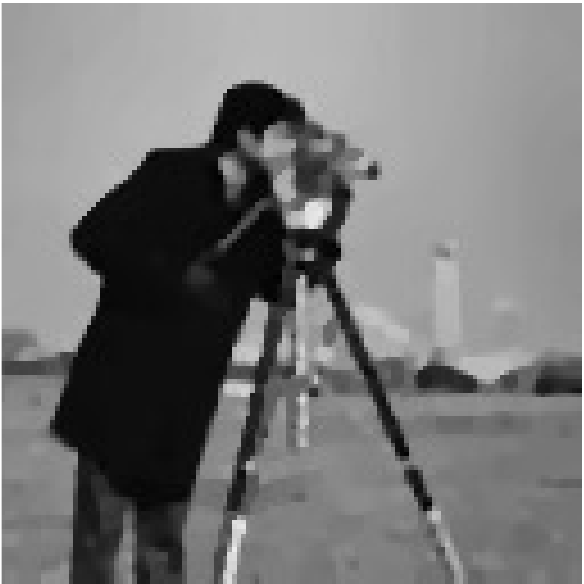}
            \subcaption{Reconstruction using the exact operator and isotropic $\TV$. $\PSNR = 27.5$, $\SSIM = 0.60$}
    	\label{pic-cameraman_exact}
    \end{subfigure}
    \\
\begin{subfigure}[t]{0.32\textwidth}
            \centering
            \includegraphics[width=\textwidth]{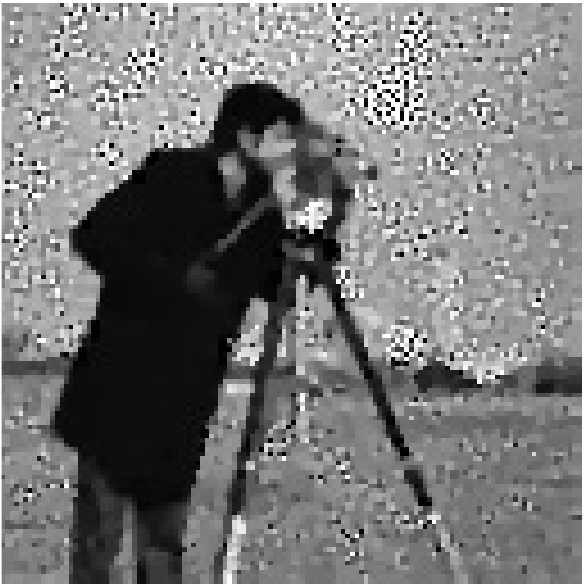}
            \subcaption{Reconstruction using a noisy operator and isotropic $\TV$. $\PSNR = 18.7$, $\SSIM = 0.36$}
    	\label{pic-cameraman_wrong}
    \end{subfigure}
\begin{subfigure}[t]{0.32\textwidth}
            \centering
            \includegraphics[width=\textwidth]{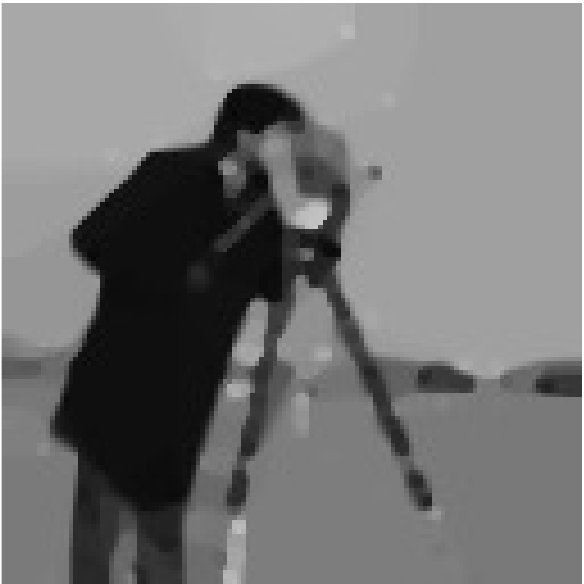}
            \subcaption{Reconstruction using interval bounds for the operator and isotropic $\TV$. $\PSNR = 23.0$, $\SSIM = 0.27$}
    	\label{pic-cameraman_bounds_isotropic}
    \end{subfigure}
\begin{subfigure}[t]{0.32\textwidth}
            \centering
            \includegraphics[width=\textwidth]{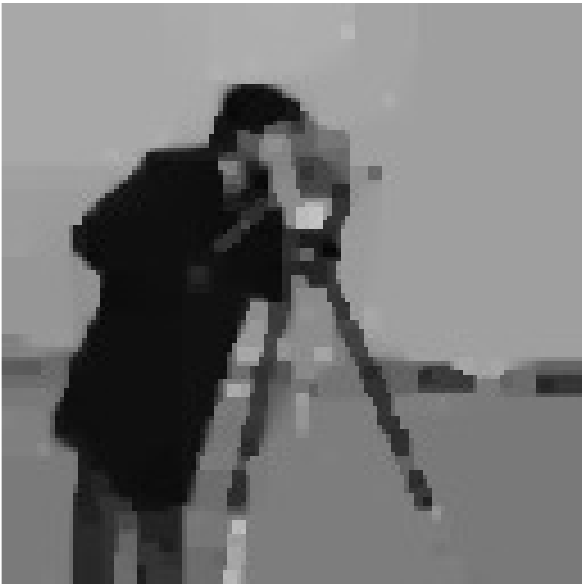}
            \subcaption{Reconstruction using interval bounds for the operator and anisotropic $\TV$. $\PSNR = 22.8$, $\SSIM = 0.26$}
    	\label{pic-cameraman_bounds_anisotropic}
    \end{subfigure}
    \caption{Reconstruction of a natural image using total variation. Reconstruction using a noisy operator produces numerous artifacts (\subref{pic-cameraman_wrong}). In the reconstructions using interval bounds for the operator the artifacts disappear (\subref{pic-cameraman_bounds_isotropic}, \subref{pic-cameraman_bounds_anisotropic}), but fine details are also removed and the images look cartoon-like.}
\end{figure}

Reconstruction using the exact operator (Figure~\ref{pic-cameraman_exact}) is of reasonable quality, with some loss of fine details, especially in the face of the cameraman and in the camera. Reconstruction using the noisy operator contains numerous artifacts (Figure~\ref{pic-cameraman_wrong}). In the reconstructions using interval bounds for the operator the artifacts disappear, but also most of the fine details of the image are removed and the image looks very cartoon-like, both with isotropic and anisotropic $\TV$ (Figures~\ref{pic-cameraman_bounds_isotropic} and~\ref{pic-cameraman_bounds_anisotropic}). 

The reason for the observed behaviour is additional freedom that the larger feasible set gives to the regulariser. Since natural images are rarely piecewise-constant, the piecewise-constant reconstructions that $\TV$ promotes don't look natural. The idea of the approach using interval bounds for the operator is to use less information coming from the forward operator, which is not perfectly known, and instead rely more on the regulariser, which is supposed to reasonably encode prior information about the image. If this is not the case, the approach results in reconstructions that look quite different from the original. Therefore, in order to use the proposed approach on natural images, one needs to try more appropriate regularisers, e.g., $\TGV$~\cite{bredies2009tgv} or $TVL_p$~\cite{Burger_TVLp_2016}. This step is, however, beyond the scope of the present paper.

\section{Conclusions}

In this paper we analysed an approach to image reconstruction problems with uncertainty in the forward operator based on partially ordered spaces. The method is essentially a variant of the residual method with a feasible set based on order intervals. Our main theoretical contribution is the study of this feasible set. It turned out that the feasible set admits two equivalent descriptions, one of which could be modified to include additional \emph{a priori} constraints on the forward operator. This is especially relevant in deblurring applications, since the rows of a blurring matrix must always sum up to one, which can be expressed as a linear constraint on the matrix. Unfortunately, we came to the conclusion that adding a linear constraint on the forward operator breaks the convexity of the feasible set.

Despite this negative result, we demonstrated in our numerical experiments that the approach is useful in deblurring with an imperfectly known blurring operator, especially in the case when the regulariser well captures qualitative information about the image. Our experiments revealed an important feature of the approach. In the absence of perfect knowledge of the forward operator, the only source of information that can compensate for this lack of knowledge is the regulariser. Therefore, features specific for the regulariser are more apparent in the interval-based method then in standard methods.


\printbibliography

\end{document}